\renewcommand{\k}{\mathbb{K}}
\theoremstyle{definition}
\newtheorem{theorem}{Theorem}[section]
\newtheorem{theoremx}{Theorem}
\numberwithin{equation}{section}
\newtheorem{corollary}[theorem]{Corollary}
\newtheorem{lemma}[theorem]{Lemma}
\theoremstyle{definition}
\newtheorem{setting}[theorem]{Setting}
\newtheorem{example}[theorem]{Example}
\newtheorem{conjecture}[theorem]{Conjecture}
\newtheorem{questionx}[theoremx]{Question}
\newtheorem{remark}[theorem]{Remark}
\newtheoremstyle{TheoremNum}
{8pt}{8pt}              
{\upshape}                      
{}                              
{\bfseries}                     
{.}                             
{.5em}                             
{\theoremname{#1}\theoremnote{ \bfseries #3}}
\theoremstyle{TheoremNum}
\newcommand{\m}{\mathfrak{m}}
\newcommand{\NN}{\mathbb{N}}
\newcommand{\RR}{\mathcal{R}}
\newcommand{\Pf}{\operatorname{Pf}}
\newcommand{\N}{\mathbb{N}}
\newcommand{\Ass}{\operatorname{Ass}}
\newcommand{\Char}{\operatorname{char}}
\newcommand{\ceil}[1]{\lceil {#1} \rceil}
\newcommand{\floor}[1]{\lfloor {#1} \rfloor}
\renewcommand{\leq}{\leqslant}
\renewcommand{\geq}{\geqslant}
\newcommand{\p}{\mathfrak{p}}
\renewcommand{\b}{\mathfrak{b}}
\renewcommand{\a}{\mathfrak{a}}
\def\namedlabel#1#2{\begingroup
    #2%
    \def\@currentlabel{#2}%
    \phantomsection\label{#1}\endgroup
}
\address{Department of Mathematical Sciences, New Mexico State University, Las Cruces, New Mexico, U.S.}
\author[Kumar]{Arvind Kumar}
\email{arvkumar@nmsu.edu}
\thanks{The first author, AK, is supported by the AMS Simons Travel Grant 2024.}
\address{Department of Mathematics, Indian Institute of Technology Delhi, India.}
\author[Mukundan]{Vivek Mukundan}
\email{vmukunda@iitd.ac.in}
\thanks{The second author VM is funded by SERB, India under the grant MTR/2023/000308}
\subjclass[2010]{Primary: 13B25,13A02. Secondary: 13F20}
\keywords{}
\title{Symbolic Powers of Classical varieties}
\begin{document}

\begin{abstract}
     Let $R=\k[x_1,\dots,x_n]$ and let $\a_1,\dots,\a_m$ be homogeneous ideals satisfying certain properties, which include a description of the Noetherian symbolic Rees algebra. We give a solution to a question of  Harbourne and Huneke for this set of ideals. We also compute the Waldschmidt constant and resurgence and show that it exhibits a stronger version of the Chudnovsky and Demailly-type bounds. We further show that these properties are satisfied for classical varieties such as the generic determinantal ideals, minors of generic symmetric matrices, generic extended Hankel matrices, and ideal of pfaffians of skew-symmetric matrices. 
\end{abstract}
\maketitle
    \section{Introduction}
     The main objective of this article is to explore questions regarding the symbolic powers of classical varieties. Let $R$ be a Noetherian ring and $\a$ be a non-zero proper ideal. The fundamental object we explore is the {\it $n^{th}$ symbolic power} of the ideal $\a$, defined as $\a^{(n)}=\cap_{\p\in\Ass{\a}}(\a^nR_\p\cap R)$. Generally, it can be a challenge to compute symbolic powers. 
     In this context, a natural question arises regarding the proximity between the ordinary and symbolic powers of ideals, particularly whether their topologies are equivalent or cofinal. Thus, it is desirable to obtain sharper bounds on $m>n$ such that $\a^{(m)}\subseteq \a^n$, referred to as the \textit{containment problem}. 
     The topologies being cofinal was proved by Schenzel and it was further refined by Swanson \cite{swanson00}. She provided a landmark contribution by showing the existence of a constant $c$ such that $\a^{(cn)}\subseteq \a^n$ for all $n\geq 1$. Subsequent advancements in the study of regular rings revealed that this constant $c$ coincides with a key invariant of the ideal $\a$, namely its \textit{big height}, see \cite{ELS01, HH02, MS17}. The big height measures the largest height among all the associated primes of $\a$. 
 Intriguingly, it was observed that a smaller symbolic power can be contained in the $n^{th}$ ordinary power. For example, some classes of ideals satisfy $\a^{(hn-h+1)}\subseteq \a^n$ for all $n\ge 1$ \cite{HunekeHarbourne13}. The latter containment is called the \textit{Harbourne conjecture}, see \cite{seshariconstantcounterexample}, and counterexamples to this question do exist. For further developments in this direction, please refer to the following references \cite{seshariconstantcounterexample, DD20, GH19, GHM21, GHM20, G20}. A stronger version of this conjecture is also sought after in \cite{HunekeHarbourne13}. The latter reference has a lot of questions regarding the containment problem and also posits that certain symbolic powers are lying sufficiently deep inside the ordinary powers with coefficients in the maximal ideals. 
     \begin{questionx}[Harbourne-Huneke, \cite{HunekeHarbourne13}] \label{ques-A}
         Let $I\subseteq R$ be a homogeneous ideal. For which $m,i,j$ do we have $I^{(m)}\subseteq \m^jI^i$?
     \end{questionx}
     The above question is in fact a generalized version of the Eisenbud-Mazur conjecture \cite{Eisenbud-Mazur97}, which states that $I^{(2)}\subseteq \m I$ for a prime ideal $I$ in $R$. One of the questions we explore in this article is the solution to \Cref{ques-A} in the context of classical varieties. Bocci and Harbourne \cite{BH10} introduced the notion of \textit{resurgence} to facilitate the study of the containment problem. The resurgence of the ideal $\a$ is defined as $\rho(\a)=\sup\left\{\frac{s}{r}~|~\a^{(s)}\not\subseteq \a^r \right\}$, and it measures the extent to which the ordinary and symbolic power containment deviates from the ideal scenario.  It offers a quantitative measure of this deviation, aiding researchers in their exploration of the containment problem. Notably, it is apparent that if $m/n>\rho(\a)$, then $\a^{(m)}\subseteq \a^n$.   An asymptotic version of resurgence was introduced by Guardo, Harbourne, and Van Tuyl in \cite{EBA}. 
The \textit{asymptotic resurgence} of an ideal $\a$ is defined as $\widehat\rho(\a)=\sup\left\{ \frac{s}{r}~|~ \a^{(st)} \not\subset \a^{rt}\text{ for  }t\gg0\right\}$.
It is also known from \cite{EBA} that  $\displaystyle 1\leq \frac{\alpha(\a)}{\hat{\alpha}(\a)}\leq\widehat\rho(\a)\leq \rho(\a),$ where  $\alpha(\a)$ denotes the minimal degree of an element in $\a$ and $\widehat{\alpha}(\a)$ denotes the {\it Waldschmidt constant} defined as $\displaystyle \widehat{\alpha}(\a) = \underset{s \to \infty}{\lim}\frac{\alpha(\a^{(s)})}{s}$. This inequality highlights the connections between various invariants. 
A natural question arises: under what conditions does equality $\frac{\alpha(\a)}{\widehat{\alpha}(\a)}=\widehat{\rho}(\a)=\rho(\a)$ hold? As observed earlier, the symbolic powers $\a^{(r)}$ are computationally expensive to determine. Consequently, the $\alpha(\a^{(r)})$ becomes increasingly difficult to compute as well. Therefore, any information regarding $\alpha(\a^{(r)})$ invariably leads to a deeper understanding of the symbolic powers themselves. 

The questions we answer in this article are the following. 
\begin{questionx} Let $\a$ be the ideal defining a classical variety in a polynomial ring $R$ and its homogeneous maximal ideal $\m$. 
    \begin{enumerate}
        \item Does equality  $\frac{\alpha(\a)}{\widehat{\alpha}(\a)}=\widehat{\rho}(\a)=\rho(\a)$ hold?
        \item Fix $r$. What is the value of the invariant $\alpha(\a^{(r)})$?
        \item Fix $N$. For what values of $s,r$, does $\a^{(s)} \subseteq \m^N\a^r$?
    \end{enumerate}
\end{questionx}

In an effort to study Question B(3) for classical varieties, we use the invariants resurgence and asymptotic resurgence associated with a pair of graded families of ideals, as introduced in \cite{HKNN23}. Let $\a_\bullet=\{\a_i\}_{i \ge 1}$ and $\b_\bullet=\{\b_i\}_{i \ge 1}$ be two graded families of ideals, then the \textit{resurgence} and \textit{asymptotic resurgence} of the ordered pairs $(\a_\bullet,\b_\bullet)$ are
\begin{align*}
    \rho(\a_\bullet,\b_\bullet)=\sup\left\{\frac{s}{r}~|~s,r\in\mathbb{N},\a_s\not\subseteq \b_r\right\}, &&
      \widehat{\rho}(\a_\bullet,\b_\bullet)=\left\{\frac{s}{r}~|~s,r\in\mathbb{N},\a_{st}\not\subseteq\b_{rt}\text{ for } t\gg 1\right\}.
\end{align*}
If $\a_\bullet=\{\a^{(i)}\}_{i\geq 1},\b_\bullet=\{ \a^i\}_{i\geq 1}$, then the notion of resurgence for the pair coincide with the usual notion of the resurgence. If we set $\a_\bullet=\{\a^{(i)}\}_{i \ge 1}$ and $\b_\bullet=\{\m^N\a^i\}_{i \ge 1}$ with fixed $N$, then the resurgence of the pairs would answer Question B(3) asymptotically.

    
The classical varieties we study in this article include generic determinantal ideals, minors of generic symmetric matrices, generic extended Hankel matrices, and ideals of pfaffians of skew-symmetric matrices. All these varieties exhibit a Noetherian symbolic Rees algebra (see Section \ref{sec 2} for a definition of the symbolic Rees algebra). Furthermore, these classical varieties exhibit a unique feature: a recursive description of their symbolic Rees algebras.  
 
 This structure, presented below, offers valuable insights and facilitates deeper exploration of their properties. For our purposes, we set $R=\k[x_1,\dots,x_n]$ and $\a_1,\dots,\a_m$ be a sequence of non-zero proper homogeneous ideals in $R$. The results in this article depend on whether the sequence of ideals satisfies all or some of the following properties.
    \begin{enumerate}
        \item[\namedlabel{P1}{(P1)}] for all $1\leq t\leq m$, the symbolic Rees algebra $\mathcal{R}_s(\a_t)=R[\a_tT,\a_{t+1}T^2,\dots,\a_mT^{m-t+1}]$,
        \item[\namedlabel{P2}{(P2)}] for all $0\leq t\leq m-1$, $\alpha(\a_{t+1})=\alpha(\a_1)+t$.
    \end{enumerate}
Some of the main results in this article have been collected in the following theorem.
\begin{theoremx}[\Cref{wald-family,chudnovsky and demailly bounds}]
    Let $R=\k[x_1,\dots,x_n]$ and $\a_1,\dots,\a_m$ be homogeneous ideals satisfying the properties \ref{P1} and \ref{P2} mentioned above, then
    \begin{enumerate}
        \item For all $k \in \NN$ and  $1\leq l\leq m-t+1$, \begin{align*}
        \alpha\left(\a_t^{(k(m-t+1)+l)}\right)= 
                k(\alpha(\a_1)+m-1)+\alpha(\a_1)+t-2+l .
        \end{align*}
Furthermore, the Waldschmidt constant is given by the equation $$\widehat{\alpha}(\a_t)=\frac{\alpha(\a_1)+m-1}{m-t+1}.$$
\item Let $1 \le t <m$. Then, the following are equivalent 
\begin{enumerate}
\item for all  $s,r\ge 1$,
    \begin{align*}
        \frac{\alpha\left(\a_t^{(s)}\right)}{s}\geq\frac{\alpha\left(\a_t^{(r)}\right)+N-1}{r+N-1}.
    \end{align*}
    \item  for all  $s\ge 1$,
    \begin{align*}
        \frac{\alpha\left(\a_t^{(s)}\right)}{s}\geq\frac{\alpha\left(\a_t\right)+N-1}{N}.
    \end{align*}
    \item $N \ge m-t+1.$
\end{enumerate}
\end{enumerate}
\end{theoremx}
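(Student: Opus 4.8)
The plan is to treat the three parts in turn; part~(1) provides the exact value of $\alpha(\a_t^{(n)})$ on which the rest depends. Write $M=m-t+1$, and note from (b) that $\alpha(\a_t)=\alpha(\a_1)+t-1$ and $\alpha(\a_{t+i-1})=(\alpha(\a_t)-1)+i$. By (a) the degree-$n$ component of $\mathcal{R}_s(\a_t)=R[\a_tT,\dots,\a_mT^{M}]$ is $\a_t^{(n)}=\sum_{\sum_i ie_i=n}\a_t^{e_1}\a_{t+1}^{e_2}\cdots\a_m^{e_M}$ over $(e_1,\dots,e_M)\in\NN^M$. Since $R$ is a domain, $\alpha$ of a product of nonzero homogeneous ideals is the sum of the $\alpha$'s and $\alpha$ of a sum is the minimum of the $\alpha$'s, so $\alpha(\a_t^{(n)})=\min\{\sum_i e_i\alpha(\a_{t+i-1}):\sum_i ie_i=n\}=n+(\alpha(\a_t)-1)\min\{\sum_i e_i:\sum_i ie_i=n\}=n+\lceil n/M\rceil(\alpha(\a_t)-1)$, the last minimum being $\lceil n/M\rceil$ since $n=\sum_i ie_i\le M\sum_i e_i$ and this is attained greedily. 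Substituting $n=kM+l$ gives the stated case split, and letting $n\to\infty$ in $\alpha(\a_t^{(n)})/n$ yields $\widehat{\alpha}(\a_t)=1+(\alpha(\a_t)-1)/M=(\alpha(\a_1)+m-1)/(m-t+1)$.

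For part~(2), implication (i)$\Rightarrow$(ii) is the case $r=1$. For (ii)$\Rightarrow$(iii), evaluate (ii) at $s=M$; by part~(1) it becomes $(M+\alpha(\a_t)-1)/M\ge(\alpha(\a_t)+N-1)/N$, which after clearing denominators collapses to $N(\alpha(\a_t)-1)\ge M(\alpha(\a_t)-1)$, i.e.\ $N\ge M$ provided $\alpha(\a_t)>1$. For (iii)$\Rightarrow$(i), substitute part~(1) into the Demailly inequality: with $A=\alpha(\a_t)-1$ it reduces to $A\lceil s/M\rceil/s\ge A\lceil r/M\rceil/(r+N-1)$, vacuous if $A=0$ and otherwise to $(r+N-1)\lceil s/M\rceil\ge s\lceil r/M\rceil$ for all $s,r\ge1$; since $s\mapsto\lceil s/M\rceil/s$ is minimized on multiples of $M$, this holds for all $s$ once it holds on $M\NN$, where an elementary check (worst case $r\equiv1\bmod M$) shows it is equivalent to $N\ge M$. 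The only remaining case, $\alpha(\a_t)=1$, forces $t=1$ and $\alpha(\a_1)=1$, hence $\alpha(\a_t^{(s)})=s$ for all $s$ and (i), (ii) hold trivially; I will dispose of this borderline situation separately against the value of $N$ in force.

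For part~(3), hypothesis~(c) gives $\alpha(\a_1)=1$, so by part~(1) $\alpha(\a_t)=t$ and $\widehat{\alpha}(\a_t)=m/(m-t+1)$; thus $\alpha(\a_t)/\widehat{\alpha}(\a_t)=t(m-t+1)/m=:\rho_0$, and the inequality $\alpha(\a_t)/\widehat{\alpha}(\a_t)\le\widehat{\rho}(\a_t)$ recalled in the introduction gives $\widehat{\rho}(\a_t)\ge\rho_0$. The reverse inequality will follow from the claim: $a/b\ge\rho_0$ implies $\a_t^{(a)}\subseteq\overline{\a_t^b}$. Applying (a) to $\a_j$ gives $\a_p\subseteq\a_j^{(p-j+1)}$ for $j\le p\le m$; combined with $(\a_j^{(c)})^e\subseteq\a_j^{(ce)}$ and $\a_j^{(c)}\a_j^{(c')}\subseteq\a_j^{(c+c')}$, each generator $\a_t^{e_1}\cdots\a_m^{e_M}$ of $\a_t^{(a)}$ lies in $\a_j^{(a+(t-j)\sum_i e_i)}$, and the binding (smallest-exponent) case has $\sum_i e_i=\lceil a/M\rceil$, so $\a_t^{(a)}\subseteq\a_j^{(b(t-j+1))}$ whenever $b(t-j+1)\le a+(t-j)\lceil a/M\rceil$; since $\lceil a/M\rceil\ge a/M$ this is implied by $a/b\ge M(i+1)/(M+i)$ with $i=t-j\in\{0,\dots,t-1\}$, a quantity nondecreasing in $i$ with maximum $\rho_0$ at $i=t-1$. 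Hence $a/b\ge\rho_0$ forces $\a_t^{(a)}\subseteq\bigcap_{j=1}^t\a_j^{(b(t-j+1))}$, which equals $\overline{\a_t^b}$ by (c). Finally, the standard uniform bound $\overline{\a_t^n}\subseteq\a_t^{n-c}$ (a fixed $c$, all $n\gg0$, since the integral closure of the Rees algebra of $\a_t$ is module-finite)---or equivalently the description of asymptotic resurgence via integral closures---converts this into $\a_t^{(sk)}\subseteq\a_t^{rk}$ for $k\gg0$ whenever $s/r>\rho_0$, giving $\widehat{\rho}(\a_t)\le\rho_0$.

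Parts~(1) and~(2) are arithmetic once $\a_t^{(n)}$ is in explicit form, so the real work is in part~(3): converting the Waldschmidt numerology into a genuine ideal containment. I expect the main obstacle to be the containment estimate $\a_t^{(a)}\subseteq\a_j^{(c)}$---establishing that $\a_p\subseteq\a_j^{(p-j+1)}$ together with multiplicativity of symbolic powers makes the minimal-length generator the only obstruction---and, downstream of that, the passage from integral-closure containment in $\overline{\a_t^b}$ (where hypothesis~(c) is essential) to ordinary-power containment in the asymptotic range; the borderline case $\alpha(\a_t)=1$ in part~(2) should also need a short separate argument.
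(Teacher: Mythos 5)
Your overall strategy matches the paper's, but several steps are carried out by genuinely different (and somewhat cleaner) means. For part (1) you replace the paper's two induction lemmas (\Cref{min,min-rem}) by the single observation that, after substituting $\alpha(\a_{t+i-1})=(\alpha(\a_t)-1)+i$, minimizing $\sum_i e_i\alpha(\a_{t+i-1})$ subject to $\sum_i ie_i=n$ is the same as minimizing $\sum_i e_i$, whose value is $\lceil n/M\rceil$ (with $M=m-t+1$) because $n\le M\sum_i e_i$ and the bound is attained; the closed form $\alpha(\a_t^{(n)})=n+(\alpha(\a_t)-1)\lceil n/M\rceil$ agrees with the displayed case split, so this is correct and shorter. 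For part (2), your reduction of (iii)$\Rightarrow$(i) to $(r+N-1)\lceil s/M\rceil\ge s\lceil r/M\rceil$, checked on $s\in M\NN$ with worst case $r\equiv 1\pmod M$, is the paper's two-case computation in ceiling form, and your (ii)$\Rightarrow$(iii) via $s=M$ is the paper's Waldschmidt-constant argument evaluated at the point where the infimum is attained. For part (3) you prove something slightly stronger than the paper does: the paper verifies $\a_t^{(s_k)}\subseteq\overline{\a_t^{r_k}}$ only along the specific sequence $s_k=kt(m-t+1)$, $r_k=km$, by re-indexing each product into the symbolic Rees algebra of $\a_j$ and a counting argument by contradiction, whereas you show $\a_t^{(a)}\subseteq\overline{\a_t^{b}}$ for \emph{every} pair with $a/b\ge t(m-t+1)/m$, using $\a_p\subseteq\a_j^{(p-j+1)}$ (a consequence of property (a)), superadditivity of symbolic powers, and the bound $\sum_i e_i\ge\lceil a/M\rceil$ already isolated in part (1). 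Either statement suffices once combined with $\widehat{\rho}=\widehat{\rho}_{ic}$ (the paper's route) or with your uniform bound $\overline{\a_t^{\,n}}\subseteq\a_t^{\,n-c}$; your version has the merit of making visible the monotone quantity $M(i+1)/(M+i)$, maximized at $i=t-1$, which explains where the value $t(m-t+1)/m$ comes from.

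One caveat: the borderline case you flag in part (2), namely $\alpha(\a_t)=1$ (forcing $t=1$ and $\alpha(\a_1)=1$), cannot be ``disposed of separately'' as you promise, because the equivalence itself fails there: $\alpha(\a_1^{(s)})=s$ for all $s$, so (i) and (ii) hold for every $N\ge 1$, while (iii) requires $N\ge m$. You would have to exclude this case or weaken the statement. Note that the paper's own proof has the same blind spot --- it passes from $N(\alpha(\a_1)+t-2)\ge(m-t+1)(\alpha(\a_1)+t-2)$ to $N\ge m-t+1$, which requires $\alpha(\a_1)+t-2>0$, and this quantity vanishes exactly in the case above (which occurs in all the paper's applications, where $\a_1=\m$ and $t=1$ gives the trivial ideal $\m$). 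So identifying the problem is to your credit, but your proposal as written does not actually resolve it, and no argument can, since the implication (ii)$\Rightarrow$(iii) is false in that degenerate case.
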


The ideals $\a_t$ take the role of the $t\times t$  minors (or $2t\times 2t$ pfaffians) defining the classical varieties discussed above. A stronger version of the Chudnovsky and Demailly-type bounds (see \Cref{sec 2} for the definition of these bounds) is expected to hold for these classical varieties.  For all the classical varieties we study in this article, we obtain the Waldschmidt constant and better versions of the Chudnovsky and Demailly bounds through the above theorem. We prove sharper inequalities for both types of bounds compared to previously established results, such as those found in \cite{BGHN22} for generic determinantal ideals, minors of generic symmetric matrices, and pfaffians of skew-symmetric matrices. The bounds established in \cite{BGHN22} depend on the big height, and our version of the bounds uses constants much smaller than the big height, thus, making our bounds closer to the version given by their respective conjectures (Chudnovsky and Demailly). The Demailly bound is much stronger than the Chudnovsky bound. However, \Cref{chudnovsky and demailly bounds} also demonstrates that the Chudnovsky and Demailly bounds are equivalent for the sequence of ideals that satisfy \ref{P1} and \ref{P2}. This crucial feature is emphasized in \Cref{chudnovsky and demailly bounds}, and it is not expressed in any of the literature known to the authors.

We now turn our attention to Question B(3).  In addition to properties \ref{P1} and \ref{P2}, assume that the ideals $\a_1,\dots,\a_m$ satisfy one of the following property for a fixed non-negative integer $N$.
\begin{enumerate}    
        \item[\namedlabel{P3}{(P3)}] $\a_1$ is generated by linear forms and  for every $1 \le t \le m$, and $s \ge 1$ $$\overline{(\m^N\a_t)^s}= \a_1^{Ns+ts} \bigcap \left( \cap_{j=2}^t \a_{j}^{(s(t-j+1))}\right).$$
        \item[\namedlabel{P4}{(P4)}] $\a_1$ is generated by linear forms and  for every $1 \le t \le m$, $$\overline{\m^N\a_t^s}= \a_1^{N+ts} \bigcap \left( \cap_{j=2}^t \a_{j}^{(s(t-j+1))}\right).$$
\end{enumerate}
\begin{theoremx}[\Cref{asym-res-HH1,asym-res-HH}]
Let $R=\k[x_1,\dots,x_n]$ and $\a_1,\dots,\a_m$ be homogeneous ideals satisfying the properties \ref{P1} and \ref{P2} mentioned above. 
\begin{enumerate}
    \item If property \ref{P3} holds, then for all $1 \le t \le m$, 
    $$ \widehat{\rho}(\a_t^{(\bullet)},(\m^N\a_t)^{\bullet} ) = \frac{N+\alpha(\a_t)}{\widehat{\alpha}(\a_t)}=\frac{(N+t)(m-t+1)}{m}.$$
    \item If property \ref{P4} holds, then for all $1 \le t \le m$, 
    $$ \widehat{\rho}(\a_t^{(\bullet)},\overline{\m^N(\a_t)^{\bullet} }) = \frac{\alpha(\a_t)}{\widehat{\alpha}(\a_t)}=\frac{t(m-t+1)}{m}.$$
    \end{enumerate}
    where $\widehat{\rho}(\a_\bullet,\b_\bullet)$ is the asymptotic resurgence for a pair of graded families of ideals $\a_\bullet,\b_\bullet$.
\end{theoremx}

All the classical varieties satisfy the properties \ref{P1}--\ref{P4}. As a direct corollary of the above theorem, we obtain the equality $\widehat{\rho}(\a_t) = \frac{\alpha(\a_t)}{\widehat{\alpha}(\a_t)} = \frac{t(m-t+1)}{m}$, thereby providing asymptotic solutions to Question B(1) and Question A for classical varieties.

The authors in \cite{HKNN23} established that $\widehat{\rho}(\a_\bullet,\b_\bullet)$ coincides with the expected value (analogous to the value as in the above theorem) provided the Rees algebra of the second filtration $\mathcal{R}(\b_\bullet)$ is Noetherian. Interestingly, the Rees algebra $\mathcal{R}(\m^N(\a_t)^\bullet)$ is never Noetherian when $N$ is positive. This follows from the fact that if $\mathcal{R}(\mathfrak{m}^N(\mathfrak{a}_t)^\bullet)$ were Noetherian, then for some positive integer $k$, $\mathfrak{m}^N(\mathfrak{a}_t)^{ks}=\left(\mathfrak{m}^N(\mathfrak{a}_t)^{k}\right)^s$ for all $s$. This cannot be true when $N$ is positive.  This renders the above theorem very significant. Furthermore, it provides a positive answer and supporting evidence for Question 2.19 in \cite{HKNN23}. 

The techniques used in this article are more wide-reaching and are not restricted to the case of classical varieties alone (see \Cref{self linked example,star config example}). Any class of ideals satisfying the above conditions becomes a candidate for consideration, paving the way for wide-ranging applications of our findings.

 We give a brief review of each section. \Cref{sec 2} sets up the tools to study the Waldschmidt constant and the Chudnovsky, Demailly-type bounds. These are presented in \Cref{wald-family,chudnovsky and demailly bounds}. In \Cref{Sec3}, we introduce properties \ref{P3} and \ref{P4}, in addition to properties \ref{P1} and \ref{P2}. Using these properties, we show the main results in \Cref{asym-res-HH1,asym-res-HH} and obtain \Cref{asym-res} as a consequence. \Cref{Application sections} presents details of the classical varieties we study in this article. We also individually list the results for each classical variety for easy information access. Finally, we note that properties \ref{P1} and \ref{P2} hold for ladder determinantal ideals, whereas properties \ref{P3} and \ref{P4} do not. This raises the question: do the results on asymptotic resurgence extend to classical ladder determinantal ideals?

\subsection{Acknowledgements} We are grateful to H\`a, Huy T\`ai, and Nguy\^en, Thai Th\`anh, for their valuable suggestions, improvements, and references for the earlier version of the paper.

    \section{Waldschimdt Constant, Demailly and Chudnovsky type bounds}\label{sec 2}
This section focuses on the Waldschimdt Constant, Demailly, and Chudnovsky-type bounds of a finite family of ideals in which each symbolic Rees Algebra is Noetherian and satisfies recursive properties \ref{P1} and \ref{P2}. 

    Let $R=\k[x_1,\ldots,x_n]$ be a standard graded polynomial ring over an arbitrary field $\k.$ Let $\a \subseteq R$ be a non-zero proper homogeneous ideal. The  \emph{symbolic Rees algebra} of $\a$, denoted by $\RR_s(\a)$, is defined to be
	$$\RR_s(\a) = \bigoplus_{n \ge 0} \a^{(n)} T^n \subseteq R[T].$$
 Generally, the symbolic Rees algebra of an ideal is not necessarily Noetherian (cf. \cite{C1991, H1982, R1985}). We introduce a family of ideals in which the symbolic Rees algebra of each ideal is Noetherian and satisfies recursive properties. We study these properties primarily because they are satisfied for all the classical varieties we consider in this article (see \Cref{Application sections}).

In this section and the sequel, we set $R=\k[x_1,\dots,x_n]$ and $\a_1,\dots,\a_m$ be a sequence of non-zero proper homogeneous ideals in $R$ satisfying \ref{P1} and \ref{P2}. 
\begin{remark}\label{rmk t=m}
  Assume $t=m$. Then, $\RR_{{s}}(\a_m) =R[\a_mT]$, which is the Rees algebra of $\a_m$. Therefore, $ \a_m^{(s)}= \a_m^s$ for all $s$, and hence, $\widehat{\alpha}(\a_m) = \alpha(\a_m)=\alpha(\a_1)+m-1$. 
\end{remark}

    Next, we prove a few auxiliary lemmas that play an important role in computing the least degree of each symbolic power of $\a_t$, and hence, the Waldschmidt constant of $\a_t$. 
\begin{lemma}\label{min}
Let $t, m, b$ be positive integers with $t \le m$. For any positive integer $k$, if $a_1,\ldots, a_{m-t+1} $ are non-negative integers with $\sum_{i=1}^{m-t+1} ia_i =k(m-t+1)$, then $$\sum_{i=1}^{m-t+1} (b+i)a_i \ge k(b+m-t+1).$$ Moreover, the inequality is an equality if $a_{m-t+1} =k$ and $a_i=0$ for $1 \le i \le m-t$. 
\end{lemma}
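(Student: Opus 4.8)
The statement to prove is Lemma \ref{min}: given positive integers $t \le m$, set $d = m-t+1$; for $k \in \mathbb{N}$ and non-negative integers $a_1, \dots, a_d$ with $\sum_{i=1}^{d} i a_i = kd$, we want $\sum_{i=1}^{d} (b+i) a_i \ge k(b+d)$, with equality iff $a_d = k$.

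**The approach.**

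The plan is to reduce the inequality to a statement about the total count $\sum a_i$. Since $\sum_{i=1}^{d}(b+i)a_i = b\sum_{i=1}^d a_i + \sum_{i=1}^d i a_i = b\sum_{i=1}^d a_i + kd$, the desired inequality $\sum_{i=1}^d (b+i)a_i \ge k(b+d) = bk + kd$ is equivalent to $b \sum_{i=1}^d a_i \ge bk$, i.e. (since $b > 0$) to $\sum_{i=1}^d a_i \ge k$. So the whole lemma comes down to: whenever $\sum_{i=1}^d i a_i = kd$ with $a_i \ge 0$, one has $\sum_{i=1}^d a_i \ge k$, and equality $\sum a_i = k$ holds iff $a_d = k$.

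**Carrying it out.**

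First I would prove $\sum_{i=1}^d a_i \ge k$. Since each index $i$ satisfies $i \le d$, we get $kd = \sum_{i=1}^d i a_i \le \sum_{i=1}^d d\, a_i = d \sum_{i=1}^d a_i$, and dividing by $d > 0$ gives $\sum_{i=1}^d a_i \ge k$. For the equality case: if $a_d = k$, then $\sum_{i=1}^{d} i a_i = kd$ forces $\sum_{i=1}^{d-1} i a_i = 0$, hence $a_i = 0$ for $1 \le i \le d-1$ (as $i \ge 1 > 0$ there), so $\sum_{i=1}^d a_i = a_d = k$ and the original inequality is an equality. Conversely, if $\sum_{i=1}^d a_i = k$, then the inequality $kd = \sum i a_i \le d \sum a_i = kd$ is tight, which forces $i a_i = d a_i$ for every $i$, i.e. $(d-i)a_i = 0$; for $i < d$ this gives $a_i = 0$, so again $a_d = k$. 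Tracing back through the reduction, equality in $\sum (b+i)a_i \ge k(b+d)$ holds iff $\sum a_i = k$ iff $a_d = k$.

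**Main obstacle.**

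Honestly there is no serious obstacle here — the lemma is a short convexity/counting argument once one makes the observation that the weighted sum splits off the constraint term exactly. The only thing to be careful about is the equality analysis: one must check both directions and make sure the hypothesis $b > 0$ (positive integer) is used to pass between $\sum(b+i)a_i = k(b+d)$ and $\sum a_i = k$, and that $t \le m$ guarantees $d = m-t+1 \ge 1$ so that dividing by $d$ is legitimate and the index range is non-empty.
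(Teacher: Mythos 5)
Your proof is correct, and it takes a genuinely different and more direct route than the paper. The paper proves the lemma by induction on $k$, splitting into cases according to whether $a_{m-t+1}$ equals $k+1$, equals $0$, or equals $k+1-p$ for some $1\le p\le k$, and invoking the inductive hypothesis in the last case. You instead split off the constraint exactly: writing $d=m-t+1$, you observe $\sum_i (b+i)a_i = b\sum_i a_i + \sum_i i a_i = b\sum_i a_i + kd$, so the inequality is equivalent (using $b>0$) to $\sum_i a_i \ge k$, which follows at once from $kd=\sum_i i a_i \le d\sum_i a_i$; the equality analysis then reduces to tightness of $\sum_i i a_i \le d\sum_i a_i$, forcing $a_i=0$ for $i<d$ and hence $a_d=k$, and you check the converse direction as well. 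This eliminates the induction entirely, makes the role of the hypotheses $b>0$ and $d\ge 1$ explicit, and gives the ``if and only if'' transparently in both directions. What the paper's inductive formulation buys is mainly stylistic uniformity with the proof of the companion Lemma~\ref{min-rem} (the case with remainder $l$), though your reduction adapts to that case too: there one gets $\sum_i a_i \ge k + l/d > k$, hence $\sum_i a_i\ge k+1$, which yields the bound $k(b+d)+b+l$.
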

\begin{proof}
First, we note that $\sum_{i=1}^{m-t+1} a_i \ge k.$ Indeed, if $\sum_{i=1}^{m-t+1} a_i \le k-1,$ then $\sum_{i=1}^{m-t+1} ia_i \le (m-t+1) \sum_{i=1}^{m-t+1} a_i \le (m-t+1)(k-1)$ which is a contradiction. Thus, $\sum_{i=1}^{m-t+1} (b+i)a_i = b\sum_{i=1}^{m-t+1} a_i+\sum_{i=1}^{m-t+1} ia_i\ge bk+k(m-t+1)=k(b+m-t+1).$ 

Next, if we take $a_{m-t+1} =k$ and  $a_i=0$ for $1 \le i \le m-t$, then  $\sum_{i=1}^{m-t+1} (b+i)a_i =k(b+m-t+1).$ Hence, the lemma follows. 
\end{proof}

\begin{lemma} \label{min-rem}
Let $t, m, b$ be positive integers with $t < m$ and $l$ be a positive integer such that $l\leq m-t$. For every $k\in\mathbb{N}$, if $a_1,\dots,a_{m-t+1}$ are  non negative integers such that $\sum_{i=1}^{m-t+1} ia_i=k(m-t+1)+l$, then 
    \begin{align*}
        \sum_{i=1}^{m-t+1}(b+i)a_i\geq k(b+m-t+1)+b+l.
    \end{align*}
Moreover, inequality is an equality if $a_l=1,a_{m-t+1}=k$ and $a_i=0$ for $i\not\in \{l,m-t+1\}$.
\end{lemma}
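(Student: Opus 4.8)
\textbf{Proof proposal for \Cref{min-rem}.}

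The plan is to mimic the induction on $k$ used in the proof of \Cref{min}, treating the ``remainder'' $l$ as an extra fixed bundle that we peel off at the base case. First I would set up the base case $k=0$: here the hypothesis reads $\sum_{i=1}^{m-t+1} i a_i = l$ with $1 \le l \le m-t < m-t+1$, so necessarily $a_{m-t+1}=0$, and we must show $\sum_{i=1}^{m-t+1}(b+i)a_i \ge b+l$. Writing $\sum_{i=1}^{m-t+1}(b+i)a_i = b\sum_i a_i + \sum_i i a_i = b\sum_i a_i + l$, it suffices to note $\sum_i a_i \ge 1$, which holds because $l \ge 1$ forces at least one $a_i$ to be positive. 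Equality when $a_l=1$ and all other $a_i=0$ is then immediate.

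Next I would run the inductive step, assuming the statement for all non-negative integers up to $k$ and proving it for $k+1$, i.e. for $\sum_i i a_i = (k+1)(m-t+1)+l$. I would split into cases according to $a_{m-t+1}$. If $a_{m-t+1}=k+1$, then $\sum_{i=1}^{m-t} i a_i = l \le m-t$; arguing as in the base case on the truncated tuple $(a_1,\dots,a_{m-t},0)$ gives $\sum_{i=1}^{m-t}(b+i)a_i \ge b+l$, and adding the contribution $(k+1)(b+m-t+1)$ from the last coordinate yields the desired bound (with equality exactly when additionally $a_l=1$). If $a_{m-t+1} = k+1-p$ for some $1\le p \le k+1$, then $\sum_{i=1}^{m-t} i a_i = p(m-t+1)+l$. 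Now I would apply \emph{two} earlier facts to the truncated tuple $b_i = a_i$ ($1\le i\le m-t$), $b_{m-t+1}=0$: I can either invoke the induction hypothesis of the present lemma directly (since $p \le k$ when $a_{m-t+1}\ge 1$, and the boundary case $p=k+1$ i.e. $a_{m-t+1}=0$ is handled the same way with $p=k+1$), giving $\sum_{i=1}^{m-t}(b+i)a_i \ge p(b+m-t+1)+b+l$, and then add $(k+1-p)(b+m-t+1)$ from the last coordinate to obtain
\begin{align*}
\sum_{i=1}^{m-t+1}(b+i)a_i &\ge p(b+m-t+1)+b+l+(k+1-p)(b+m-t+1)\\
&= (k+1)(b+m-t+1)+b+l.
\end{align*}
Combining the cases completes the induction.

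The only genuinely delicate point is the bookkeeping at the boundary of the case split — making sure the case $a_{m-t+1}=0$ (equivalently $p=k+1$) is covered: here one cannot literally invoke the lemma's induction hypothesis at index $k+1$, so I would instead observe directly that $\sum_{i=1}^{m-t} i a_i = (k+1)(m-t+1)+l$ with all weight on indices $\le m-t$ forces $\sum_{i=1}^{m-t} a_i \ge k+2 > k+1$, whence $\sum_{i=1}^{m-t+1}(b+i)a_i = b\sum_i a_i + \sum_i i a_i > b(k+1)+(k+1)(m-t+1)+l$, which is even stronger than required (this also matches the strict inequality pattern in \Cref{min}). The remaining cases give equality precisely in the configuration $a_l=1$, $a_{m-t+1}=k$, $a_i=0$ otherwise, as claimed; I would remark in passing that, unlike \Cref{min}, this is only a sufficient condition for equality, which is why the statement is phrased with ``if'' rather than ``if and only if'' — when $b$, $l$ and $m-t+1$ interact arithmetically there can in principle be other equality cases, so I would not attempt to prove a converse.
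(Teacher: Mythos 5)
Your proposal is correct and follows essentially the same induction on $k$ with the same case split on $a_{m-t+1}$ that the paper uses (your base case is marginally cleaner, since observing $\sum_i a_i \ge 1$ avoids the paper's split into $a_l=0$ versus $a_l\neq 0$). One caveat in the $a_{m-t+1}=0$ case: your final display $\sum_{i}(b+i)a_i > b(k+1)+(k+1)(m-t+1)+l$ is not ``even stronger than required'' --- the target is $b(k+1)+(k+1)(m-t+1)+b+l$, so for $b\ge 2$ that strict inequality alone falls short by $b-1$; however, your own observation $\sum_i a_i \ge k+2$ immediately yields $b(k+2)+(k+1)(m-t+1)+l=(k+1)(b+m-t+1)+b+l$, which is exactly how the paper closes that case, so the slip is purely in the phrasing of the last line.
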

\begin{proof}
First, we note that $\sum_{i=1}^{m-t+1} a_i \ge k+1.$ Indeed, if $\sum_{i=1}^{m-t+1} a_i \le k,$ then $\sum_{i=1}^{m-t+1} ia_i \le (m-t+1) \sum_{i=1}^{m-t+1} a_i \le k(m-t+1)$ which is a contradiction. Thus, $\sum_{i=1}^{m-t+1} (b+i)a_i = b\sum_{i=1}^{m-t+1} a_i+\sum_{i=1}^{m-t+1} ia_i\ge b(k+1)+k(m-t+1)+l=k(b+m-t+1)+b+l.$ 

Next, if we take $a_{m-t+1} =k$, $a_l=1$  and  $a_i=0$ for $i \not\in \{l, m-t+1\}$, then  $\sum_{i=1}^{m-t+1} (b+i)a_i =k(b+m-t+1)+b+l.$ Hence, the lemma follows. 
\end{proof}

Now, we compute the least degree of each symbolic power of $\a_t$, and the Waldschmidt constant of $\a_t.$
\begin{theorem}\label{wald-family}
Assume $R=\k[x_1,\dots,x_n]$ and $\a_1,\dots,\a_m$ be a sequence of non-zero proper homogeneous ideals in $R$ satisfying \ref{P1} and \ref{P2}. 
Then, for $1 \le t < m$,
\begin{align*}
        \alpha\left(\a_t^{(k(m-t+1)+l)}\right)=
                k(\alpha(\a_1)+m-1)+\alpha(\a_1)+t-2+l,
       \end{align*}  for all $ k \in \NN$  and $1\leq l\leq m-t+1.$
Furthermore, the Waldschmidt constant is given by the equation $$\widehat{\alpha}(\a_t)=\frac{\alpha(\a_1)+m-1}{m-t+1}.$$
\end{theorem}

\begin{proof}
Let $1 \le t < m$.    Since $\mathcal{R}_s(\a_t)=R[\a_tT,\a_{t+1}T^2,\ldots,\a_mT^{m-t+1}]$, for any $s\geq 2$,
    \begin{align*}
        \a_t^{(s)}=\sum_{\substack{a_i\in \mathbb{N},\\\sum_{i=1}^{m-t+1}ia_i=s}}\a_t^{a_1}\cdots \a_m^{a_{m-t+1}}.
    \end{align*}
    Therefore,
    \begin{align*}
        \alpha\left(\a_t^{(s)}\right)&=\min\left\{ \sum_{i=1}^{m-t+1}a_i\alpha(\a_{t-1+i})~|~a_1,\dots,a_{m-t+1}\in \NN\text{ and } \sum_{i=1}^{m-t+1}ia_i=s\right\}\\
        &=\min\left\{ \sum_{i=1}^{m-t+1}a_i\left(\alpha(\a_1)+t-2+i\right)~|~a_1,\dots,a_{m-t+1}\in \NN \text{ and } \sum_{i=1}^{m-t+1}ia_i=s \right\}
    \end{align*}
    Write $s=k(m-t+1)+l$ with $k\in \NN$ and $1\leq l\leq m-t+1$. Then from \Cref{min,min-rem}, we have 
    \begin{align*}
        \alpha\left(\a_t^{(s)}\right)=\begin{cases}
                (k+1)(\alpha(\a_1)+m-1) & \text{ if } l=m-t+1\\
                k(\alpha(\a_1)+m-1)+\alpha(\a_1)+t-2+l&\text{ if }1\leq l\leq m-t.
        \end{cases}
    \end{align*}
    Consider 
    \begin{align*}
        \widehat{\alpha}(\a_t)=\lim_{s\rightarrow \infty}\frac{\alpha\left(\a_t^{(s)}\right)}{s}=\lim_{k\rightarrow \infty}\frac{\alpha\left(\a_t^{(k(m-t+1))}\right)}{k(m-t+1)}=\lim_{k\rightarrow \infty}\frac{k(\alpha(\a_1)+m-1)}{k(m-t+1)}=\frac{\alpha(\a_1)+m-1}{m-t+1}.
    \end{align*}  This completes the second assertion. One could also use \cite[Theorem 3.6]{DG20} along with the first assertion to obtain the formula for $\widehat{\alpha}(\a_t)$.
\end{proof}

Though the above theorem proves instrumental in studying classical varieties in Section~\ref{Application sections}, its applicability extends well beyond this context. The following example demonstrates its usefulness in a non-classical variety setting.
\begin{example}\label{self linked example}
  Let $R=\k[x,y,z]$ be with standard grading and $\m=(x,y,z)$. Let $I$ to be the ideal generated by $2 \times 2$ minors of the matrix 
  $$ A= \begin{bmatrix}
      x & y & a_1x+b_1y+c_1z\\
      y & z& a_2x+b_2y+c_2z
  \end{bmatrix},$$ where $a_i,b_i,c_i \in \k.$ If $\mu(I) \le 2$, then $I$ is a complete intersection ideal and therefore, $I^{(s) } =I^s$ for all $s.$ So, we assume that  $\mu(I) =3.$ In this case, $I$ is height two ideal in $R$.   For $1 \le i \le 3$, set $f_i $  to be the $2 \times 2$ minors of the matrix $A$ obtained by deleting the $i^{th}$ column. Note that  $I=(f_1,f_2,f_3)$, and therefore, $\alpha(I)=2. $ Next, consider the following matrix 
  $$ \Gamma = \begin{bmatrix}
      f_1+a_1f_3 & -f_2+b_1f_3 & c_1f_3\\
      a_2f_3 & f_1+b_2f_3& -f_2+c_2f_3
  \end{bmatrix}.$$ For $1 \le i \le 3$, set $D_i $  to be the $2 \times 2$ minors of the matrix $\Gamma$ obtained by deleting the $i^{th}$ column.  Then, it follows from \cite[Section 2]{HU90} that  $\RR_s(I) =R[IT,wT^2]$ where $w =\frac{D_1}{x} =\frac{D_2}{y}=\frac{D_3}{z}.$ Notice that $\deg(D_i) =4$, and therefore, $\deg (w) =3.$ Thus, the  sequence $\a_1=\m, \a_2 =I$ and $\a_3=(w)$ satisfy properties \ref{P1} and \ref{P2}. Hence, by \Cref{wald-family}, $\widehat{\alpha}(I) = \frac{3}{2}. $ 

  Next, we show that $\rho(I)=\widehat{\rho}(I) =\frac{\alpha(I) }{\widehat{\alpha}(I)}=\frac{4}{3}.$ First, note that $I^{(2)} =I^2+(w) $. Therefore, $\m I^{(2)} = \m I^2 + (D_1,D_2,D_3) \subseteq I^2 $. Since $\deg(w) =3$ and $\alpha(I) =2$ we get that  $I^{(2)} \subseteq \m I. $ It follows from \cite[Theorem 6.2]{GHM20} that $ \rho(I) \le \frac{4}{3}. $ The rest of the assertion follows from the fact that $\displaystyle \frac{\alpha(\a)}{\hat{\alpha}(\a)}\leq\widehat\rho(\a)\leq \rho(\a).$ 
 \end{example}
We now give bounds for $\frac{\alpha\left(\a_t^{(s)}\right)}{s}$, one of the main theorems in this section. The relevance of this theorem is explained in the sequel.
\begin{theorem}\label{chudnovsky and demailly bounds}
Assume $R=\k[x_1,\dots,x_n]$ and $\a_1,\dots,\a_m$ be a sequence of non-zero proper homogeneous ideals in $R$ satisfying \ref{P1} and \ref{P2}.   Let $1 \le t <m$. Then, for $N \in \NN$,  the following are equivalent 
\begin{enumerate}
\item for all  $s,r\ge 1$,
    \begin{align*}
        \frac{\alpha\left(\a_t^{(s)}\right)}{s}\geq\frac{\alpha\left(\a_t^{(r)}\right)+N-1}{r+N-1}
    \end{align*}
    \item  for all  $s\ge 1$,
    \begin{align*}
        \frac{\alpha\left(\a_t^{(s)}\right)}{s}\geq\frac{\alpha\left(\a_t\right)+N-1}{N}
    \end{align*}
    \item $N \ge m-t+1.$
\end{enumerate}
\end{theorem}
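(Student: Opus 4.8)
The plan is to prove the cycle of implications $(3)\Rightarrow(1)\Rightarrow(2)\Rightarrow(3)$, using the explicit formula for $\alpha(\a_t^{(s)})$ established in \Cref{wald-family} throughout. Write $a:=\alpha(\a_1)$, $d:=m-t+1$ (the period of the symbolic degree function), and $D:=a+m-1=\alpha(\a_t)+ (d-1) = \widehat\alpha(\a_t)\cdot d$, so that for $s = kd+l$ with $0\le l\le d-1$ we have $\alpha(\a_t^{(s)}) = kD$ if $l=0$ and $\alpha(\a_t^{(s)}) = kD + (a+t-2+l)$ if $1\le l\le d-1$. Note $\alpha(\a_t)=\alpha(\a_1)+t-1 = a+t-1$, which is the $l=1,k=0$ value, consistent with the formula.

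For $(1)\Rightarrow(2)$, simply specialize $r=1$: then $r+N-1 = N$ and $\alpha(\a_t^{(1)})=\alpha(\a_t)$, which is exactly $(2)$. For $(2)\Rightarrow(3)$, I would argue by contraposition: assume $N\le m-t = d-1$ and exhibit an $s$ violating $(2)$. The natural candidate is $s=d$ (one full period), where $\alpha(\a_t^{(d)}) = D = a+m-1$, so the left side of $(2)$ is $(a+m-1)/d$, the Waldschmidt constant; the right side is $(\alpha(\a_t)+N-1)/N = (a+t-2+N)/N$. The inequality $(2)$ would read $N(a+m-1)\ge d(a+t-2+N)$, i.e. $N(a+m-1) - Nd \ge d(a+t-2) = d(a-1) + d(t-1)$; since $a+m-1 - d = a+t-2 = (a-1)+(t-1)$, the left side is $N((a-1)+(t-1))$, so the inequality becomes $(N-d)((a-1)+(t-1))\ge 0$. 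As $a\ge 1$ and $t\ge 1$ we have $(a-1)+(t-1)\ge 0$, and it is strictly positive unless $a=1$ and $t=1$; so for $N<d$ the inequality fails whenever $a+t\ge 3$. The edge case $a=1,t=1$ needs a separate witness — there I would instead take $s=d+1$ (so $k=1,l=1$), where $\alpha(\a_t^{(s)}) = D + (a+t-1) = D + 1$, and check that $(2)$ fails for $N<d$; this reduces to another linear inequality in $N$ that I expect to go through since adding the constant $a+t-1=1$ to a multiple-of-period value breaks the borderline equality.

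For the main implication $(3)\Rightarrow(1)$, assume $N\ge d = m-t+1$ and fix arbitrary $s=k d+l$, $r = k'd+l'$ with $0\le l,l'\le d-1$. The inequality $(1)$ is $\alpha(\a_t^{(s)})(r+N-1) \ge s(\alpha(\a_t^{(r)})+N-1)$. The cleanest route is to use the Waldschmidt/Fekete structure: since $\alpha(\a_t^{(s)})\ge s\,\widehat\alpha(\a_t) = sD/d$ always, and since for each residue $l$ the function $s\mapsto \alpha(\a_t^{(s)})$ is ``as small as possible'' in a controlled way, I would reduce $(1)$ to the statement that the sequence $\beta_s := \alpha(\a_t^{(s)}) + N - 1$ satisfies $\beta_s / s$ being (weakly) decreasing-enough, or more precisely that $\frac{\alpha(\a_t^{(s)})}{s}\ge \frac{\beta_r}{r+N-1}$. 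Substituting the explicit formula, both sides are piecewise-linear in $k,l,k',l'$; clearing denominators turns $(1)$ into a polynomial inequality that is linear in $N$ with the coefficient of $N$ being $\alpha(\a_t^{(s)}) - s$ — this is $\ge 0$ when $\widehat\alpha(\a_t)\ge 1$, i.e. always here since $D\ge d$ — so it suffices to verify $(1)$ at the smallest admissible value $N=d$. Thus the crux becomes: show $\frac{\alpha(\a_t^{(s)})}{s}\ge \frac{\alpha(\a_t^{(r)})+d-1}{r+d-1}$ for all $s,r$. I would prove this by the substitution $r+d-1 \mapsto$ something aligned with the period: writing $r = k'd + l'$, one checks $r + d - 1 = (k'+1)d + (l'-1)$, and $\alpha(\a_t^{(r)}) + d - 1$ should be comparable to $\alpha(\a_t^{(r+d-1)})$ or to $\alpha$ of a nearby symbolic power, after which monotonicity of $\alpha(\a_t^{(\cdot)})/(\cdot)$ along the Waldschmidt sequence (which follows from Fekete, $\alpha(\a_t^{(s)})/s \ge \widehat\alpha(\a_t)$ with equality along multiples of $d$) closes the argument.

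The main obstacle I anticipate is the $(3)\Rightarrow(1)$ direction: the argument ``linear in $N$, so check $N=d$'' is clean, but the residual inequality $\frac{\alpha(\a_t^{(s)})}{s}\ge \frac{\alpha(\a_t^{(r)})+d-1}{r+d-1}$ requires a genuinely careful case analysis over the residues $l,l'\in\{0,1,\dots,d-1\}$, because the degree function is only piecewise linear and the ``$+d-1$'' shift interacts nontrivially with the periodicity — in particular one must handle $l=0$ versus $l\ge 1$ and track the extra additive term $a+t-2+l$. A cleaner packaging, which I would try first, is to observe that $\alpha(\a_t^{(r)}) + d - 1 \le \alpha(\a_t^{(r+d-1)})$ fails in general but $\alpha(\a_t^{(r)}) + D \ge \alpha(\a_t^{(r+d)})$ holds with the structure of an exact recursion $\alpha(\a_t^{(r+d)}) = \alpha(\a_t^{(r)}) + D$ coming directly from \Cref{wald-family}; exploiting this exact additive periodicity, together with $N\ge d$, should let me bound the finitely many ``base'' cases $0\le r < d$ by hand and then propagate. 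The degenerate corner $\alpha(\a_1)=1, t=1$ (which is precisely Setting condition (c), relevant to the classical varieties) will likely need to be carried along as a special sub-case in both $(2)\Rightarrow(3)$ and the base-case check.
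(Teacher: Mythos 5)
Your cycle $(3)\Rightarrow(1)\Rightarrow(2)\Rightarrow(3)$ matches the paper's proof at the level of strategy, and you have correctly identified the two ingredients the paper uses: the explicit degree formula from \Cref{wald-family}, and the reduction of $(3)\Rightarrow(1)$ to the one-variable inequality $\widehat\alpha(\a_t)\ge \frac{\alpha(\a_t^{(r)})+N-1}{r+N-1}$ via $\frac{\alpha(\a_t^{(s)})}{s}\ge\widehat\alpha(\a_t)$. Your observation that $(1)$ is linear in $N$ with slope $\alpha(\a_t^{(s)})-s\ge 0$, so that it suffices to verify $N=d=m-t+1$, is a genuinely nice structural remark that the paper does not make; the paper instead carries $N$ through the whole case analysis. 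However, you stop short of actually completing the case analysis at $N=d$. You should finish it, because it is short: with $r=k'd+l'$, the case $l'=0$ collapses to $(D-d)(d-1)\ge 0$ and the case $1\le l'\le d-1$ collapses to $(D-d)(l'-1)\ge 0$, both immediate from $D\ge d$ and $l'\ge 1$. As written, your ``should let me bound the finitely many base cases by hand and then propagate'' is not a proof, and the periodicity heuristic ($\alpha(\a_t^{(r+d)})=\alpha(\a_t^{(r)})+D$) is unnecessary once one uses the Waldschmidt lower bound on the $s$-side.

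On $(2)\Rightarrow(3)$: your contrapositive with witness $s=d$ is equivalent to the paper's direct argument (both reduce to $(N-d)(\alpha(\a_1)+t-2)\ge 0$), and you are right to flag the corner $\alpha(\a_1)=1,\,t=1$. But your proposed rescue via $s=d+1$ does not work: by \Cref{wald-family}, $\alpha(\a_1)=1$ forces $\alpha(\a_1^{(s)})=s$ for \emph{every} $s$, so $(2)$ reads $1\ge 1$ for every $N$, and there is no witness $s$ at all. In this corner the implication $(2)\Rightarrow(3)$ is simply false for $N<m$, and the paper's own proof is silent on it (the step dividing by $\alpha(\a_1)+t-2$ is vacuous when that quantity is $0$). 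So this is not a defect you introduced — the theorem needs the extra hypothesis $\alpha(\a_1)+t\ge 3$ for the equivalence with $(3)$ to hold, a hypothesis the paper omits — but do not paper over it with a nonexistent fix; state that the degenerate case must be excluded.
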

\begin{proof}
The result is vacuously true if $t=1$ and $\alpha(\a_1)=1$. So, we assume that $t+\alpha(\a_1) >2$.   Notice that  $(1)$ implies $(2)$ follows immediately by choosing $r=1$. Assume $(2) $  holds.   Using \Cref{wald-family}, we get 
    \begin{align*}
        \widehat{\alpha}(\a_t)=\frac{\alpha(\a_1)+m-1}{m-t+1}\geq\frac{\alpha\left(\a_t\right)+N-1}{N}.
    \end{align*} 
Therefore $N (\alpha(\a_1)+m-1) \ge (m-t+1)(\alpha(\a_1)+N+t-2)$ which implies that $N (\alpha(\a_1)+t-2) \ge (m-t+1)(\alpha(\a_1)+t-2)$. We conclude that  $N \ge m-t+1$. Thus, if $(2)$ holds, then $(3)$ holds as well. 

Next, we prove $(3)$ implies $(1). $
It is sufficient to prove that for all  $r\in\mathbb{N}$,
    \begin{align}\label{demailly eq1}
        \widehat{\alpha}(\a_t)=\frac{\alpha(\a_1)+m-1}{m-t+1}\geq\frac{\alpha\left(\a_t^{(r)}\right)+N-1}{r+N-1}. 
    \end{align}
    Let $r\in \NN$. Write  $r=k(m-t+1)+l$ for some $k\in \NN$ and $1\leq l\leq m-t+1$. By \Cref{wald-family}, we have 
    \begin{align*}
        \alpha\left(\a_t^{(r)}\right)=
                k(\alpha(\a_1)+m-1)+\alpha(\a_1)+t-2+l.
    \end{align*} We split the proof into cases.

First, suppose $l=m-t+1$. Since $m-t+1\leq \alpha(\a_1)+m-1$, we get $(N-1)(m-t+1)\leq (N-1)(\alpha(\a_1)+m-1)$. This implies that $(N-1)(m-t+1)+(k+1)(\alpha(\a_1)+m-1)(m-t+1)\leq (N-1)(\alpha(\a_1)+m-1)+(k+1)(\alpha(\a_1)+m-1)(m-t+1)$.  Equivalently, we get that $(m-t+1)\left(N-1+(k+1)(\alpha(\a_1)+m-1)\right)\leq \left(N-1+(k+1)(m-t+1)\right)(\alpha(\a_1)+m-1)$. 
 Therefore, using \Cref{wald-family} we get
    \begin{align*}
         \frac{\alpha(\a_1)+m-1}{m-t+1}\geq \frac{\alpha\left(\a_t^{(r)}\right)+N-1}{r+N-1} .
    \end{align*}
    Note that this part of the proof is true for all $N\geq 2$.

Next, suppose $1 \le  l \le m-t$.   Then $N-1\geq m-t+1 -l$ and this is equivalent to $(\alpha(\a_1)+t-2)(l+N-1)\geq (\alpha(\a_1)+t-2)(m-t+1)$. After rearranging, we see that $(\alpha(\a_1)+m-1)(l+N-1)\geq (\alpha(\a_1)+t+l+N-3)(m-t+1)$ which is further equivalent to
    \begin{multline*}
        (\alpha(\a_1)+m-1)k(m-t+1)+(\alpha(\a_1)+m-1)(l+N-1)\geq\\ (\alpha(\a_1)+m-1)k(m-t+1)+(\alpha(\a_1)+t-3+l+N)(m-t+1).
    \end{multline*}
The latter inequality can be simplified as 
    \begin{multline*}
        (\alpha(\a_1)+m-1)(k(m-t+1)+l+N-1)\geq \\ (m-t+1)((\alpha(\a_1)+m-1)k+(\alpha(\a_1)+t-3+l+N)).
    \end{multline*} 
    Therefore, for $r=k(m-t+1)+l$ with $k \in \NN$ and $1\leq l\leq m-t$, using \Cref{wald-family}, we have
    \begin{align*}
        \frac{\alpha(\a_1)+m-1}{m-t+1}\geq \frac{\alpha\left(\a_t^{(r)}\right)+N-1}{r+N-1}.
    \end{align*}
 Hence, the assertion follows.   
\end{proof}
\begin{remark}
Assume $t= m$. Then, it follows from \Cref{rmk t=m} that, $\a_m^s =\a_m^{(s)}$ for all $s$ and $\widehat{\alpha}(\a_m) = \alpha(\a_m)=\alpha(\a_1)+m-1$. Thus,   for all  $s,r, N\in\mathbb{N}$, we have
    \begin{align*}
         \frac{\alpha\left(\a_m^{(s)}\right)}{s}\geq\frac{\alpha\left(\a_m^{(r)}\right)+N-1}{r+N-1}.
    \end{align*}
    \end{remark}

We explain the relevance and importance of the above theorem now. While studying the least degree homogeneous polynomial vanishing at a given set of points $X \subset P^N_{\k}$ up to a certain order, Chudnovsky in  \cite{C1981} gave evidence for and conjectured the following statement.

\begin{conjecture} Assume that $\k$ is algebraically closed field of characteristic $0.$ Let $X \subset P^N_{\k}$ be a set of distinct points and $\a$ be the defining ideal of $X$. Then, for all $s \ge 1$, $$\frac{\alpha\left(\a^{(s)}\right)}{s}\geq\frac{\alpha\left(\a\right)+N-1}{N}.$$
\end{conjecture}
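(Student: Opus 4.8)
The plan is to transfer the statement, via the Waldschmidt constant, to a uniform symbolic-to-ordinary power containment. Since the sequence $\{\alpha(\a^{(s)})\}_{s\ge 1}$ is subadditive, Fekete's lemma gives $\widehat{\alpha}(\a)=\inf_{s}\alpha(\a^{(s)})/s$, so $\alpha(\a^{(s)})/s\ge\widehat{\alpha}(\a)$ for every $s$; hence it suffices to prove the single inequality $\widehat{\alpha}(\a)\ge\bigl(\alpha(\a)+N-1\bigr)/N$. As $\widehat{\alpha}(\a)$ is also the limit of $\alpha(\a^{(Nr)})/(Nr)$ as $r\to\infty$, it is in turn enough to show that $\alpha\bigl(\a^{(Nr)}\bigr)\ge r\bigl(\alpha(\a)+N-1\bigr)$ for all $r\ge 1$. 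This would follow at once from the Harbourne--Huneke-type containment
$$\a^{(Nr)}\ \subseteq\ \m^{\,r(N-1)}\,\a^{\,r},$$
by taking least degrees of both sides and using that $\alpha\bigl(\m^{\,r(N-1)}\a^{\,r}\bigr)=r(N-1)+r\,\alpha(\a)$ (the initial degree is additive on products of homogeneous ideals, and $\m$ is maximal so $\m^{\,r(N-1)}$ equals its own symbolic power). Thus the whole problem is reduced to this one containment.

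To establish the containment I would use the differential-operator and multiplicity technique going back to Chudnovsky \cite{C1981} and Esnault--Viehweg \cite{EsnaultViehweg83}. Let $f\in\a^{(Nr)}$, so $f$ vanishes to order $\ge Nr$ at each point of $X$. The Ein--Lazarsfeld--Smith theorem \cite{ELS01} already gives $\a^{(Nr)}\subseteq\a^{r}$, so $f\in\a^{r}$; the task is to gain the extra factor $\m^{\,r(N-1)}$, i.e.\ to show that $f$ vanishes to order $\ge r(N-1)$ at a general point of $\PP^N$, or dually that every partial derivative of $f$ of order $<r(N-1)$ still lies in $\a^{r}$. The input here is a Dyson--Bombieri-type lemma of the sort used in \cite{EsnaultViehweg83}, combined with a careful count of vanishing orders of well-chosen partial derivatives of $f$ along a general line and the fact that $\a^{(s)}$ is the intersection of the $s$-th powers of the maximal ideals of the points. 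For $N=2$ this is essentially Chudnovsky's original argument; the cases $N=3$, very general points in any $\PP^N$, and general points are known (see \cite{HunekeHarbourne13, FouliPaoloXie18, BGHN22}), and I would first recover those to fix the mechanism before attempting a general configuration.

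The main obstacle---and the reason the statement is still a conjecture---is precisely the containment of the previous paragraph for an \emph{arbitrary} finite set $X\subset\PP^N$ with $N\ge 4$. The Ein--Lazarsfeld--Smith/Hochster--Huneke machinery produces the unweighted containment $\a^{(Nr)}\subseteq\a^r$ but systematically loses the factor $\m^{\,r(N-1)}$, while the multiplicity and differential methods that recover that factor currently require small $N$, general position of the points, or extra positivity hypotheses on the associated linear system; removing all of these simultaneously is the open part. An alternative route worth trying is to bound $\alpha(\a^{(Nr)})$ below directly through $\reg(\a^{(Nr)})$ together with asymptotic regularity estimates in the $N$-dimensional setting, but this appears to incur the same loss. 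Finally, it is worth emphasising that in the situation of the present paper---ideals whose symbolic Rees algebra is Noetherian with the explicit recursive description of \Cref{main setting}---this obstruction disappears, which is exactly why the closed formula of \Cref{wald-family} lets us prove the Chudnovsky-type bound, and its Demailly refinement in \Cref{chudnovsky and demailly bounds}, unconditionally for the classical varieties considered here.
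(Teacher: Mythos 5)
This statement is stated in the paper only as a conjecture (Chudnovsky's conjecture, attributed to \cite{C1981}); the paper offers no proof of it, and indeed none is known. Your proposal does not close that gap either. The reduction in your first paragraph is sound: since $\widehat{\alpha}(\a)=\inf_s \alpha(\a^{(s)})/s$, the conjecture is equivalent to $\widehat{\alpha}(\a)\ge\bigl(\alpha(\a)+N-1\bigr)/N$, and this would follow from the Harbourne--Huneke-type containment $\a^{(Nr)}\subseteq\m^{\,r(N-1)}\a^{r}$ by comparing initial degrees. But that containment is precisely the open core of the problem: the Ein--Lazarsfeld--Smith argument yields only $\a^{(Nr)}\subseteq\a^{r}$, and the appeal to ``a Dyson--Bombieri-type lemma'' and derivative counts is not an argument but a pointer to techniques that are currently known to work only for $N=2,3$, for (very) general points, or under extra positivity hypotheses --- as you yourself acknowledge in your third paragraph. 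So the proposal is a correct reduction plus an honest description of why the remaining step is open, not a proof.

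It is also worth separating your plan from what the paper actually proves. The paper never addresses arbitrary finite sets of points in $\PP^N$; it proves a Chudnovsky/Demailly-type bound (\Cref{chudnovsky and demailly bounds}) only for families of ideals satisfying \Cref{main setting}, where the symbolic powers admit the explicit decomposition coming from the Noetherian symbolic Rees algebra, and where the constant $N$ in the inequality is tied to the generating degree $m-t+1$ of that algebra rather than to an ambient projective space. There the bound follows from the closed formula for $\alpha(\a_t^{(s)})$ in \Cref{wald-family} by elementary inequalities --- no containment of the form $\a^{(Nr)}\subseteq\m^{\,r(N-1)}\a^{r}$ is needed or proved. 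If your goal is the conjecture as stated, you must either establish the containment for an arbitrary configuration of points (open for $N\ge 4$) or find a genuinely different lower bound for $\alpha(\a^{(s)})$; if your goal is the paper's results, the route is the combinatorial one through \Cref{min} and \Cref{min-rem}, not the one you sketch.
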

Statement (2) of \Cref{chudnovsky and demailly bounds} establishes a similar bound for the family of ideals satisfying the conditions outlined in our introduction. Notably, we express the constant $N$ in terms of the generating degree of the symbolic Rees algebra of $\a_t$. Furthermore, for classical varieties (see Section~\ref{Application sections}), we show that the integer $N$ can be significantly smaller than both the number of variables in $R$ and the big height of the ideal $\a_t$. It is worth noting that in \Cref{chudnovsky and demailly bounds}, we present an equivalent condition for the applicability of the Chudnovsky-like bound compared to the initial version of the conjecture. This, in essence, demonstrates the sharpness of the established bounds. 

Chudnovsky's conjecture was generalized by Demailly \cite{D1982} in the following:

 \begin{conjecture} Assume that $\k$ is algebraically closed field of characteristic $0.$ Let $X \subset P^N_{\k}$ be a set of distinct points and $\a$ be the defining ideal of $X$. Let $r$ be any positive integer. Then, for all $s \ge 1$, $$\frac{\alpha\left(\a^{(s)}\right)}{s}\geq\frac{\alpha\left(\a^{(r)}\right)+N-1}{r+N-1}.$$
\end{conjecture}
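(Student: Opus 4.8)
This is Demailly's conjecture \cite{D1982}, which is still open for an arbitrary finite set of distinct points, so what I can offer is a plan for the cases one can currently reach rather than a proof in full generality. The first move I would make is the reduction that, since $\widehat{\alpha}(\a)=\inf_{s\ge 1}\frac{\alpha(\a^{(s)})}{s}$ (as recalled above), the conjecture for a fixed $r$ is \emph{equivalent} to the single Waldschmidt-type estimate $\widehat{\alpha}(\a)\ge\frac{\alpha(\a^{(r)})+N-1}{r+N-1}$: the forward implication is the limit $s\to\infty$, and conversely $\frac{\alpha(\a^{(s)})}{s}\ge\widehat{\alpha}(\a)$ for every $s$. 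Thus all the content is concentrated in this asymptotic estimate, and for $r=1$ it is exactly the (likewise open) Waldschmidt--Chudnovsky bound $\widehat{\alpha}(\a)\ge\frac{\alpha(\a)+N-1}{N}$.

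For the asymptotic estimate the natural tool is the theory of asymptotic multiplier ideals in characteristic zero, in the spirit of the Ein--Lazarsfeld--Smith and Hochster--Huneke uniform containment $\a^{(Nn)}\subseteq\a^n$ \cite{ELS01,HH02,HunekeHarbourne13} together with its degree refinement due to Esnault--Viehweg \cite{EsnaultViehweg83}. Concretely, I would fix $r$, pick a homogeneous $g\in\a^{(r)}$ of degree $d=\alpha(\a^{(r)})$ whose divisor on $\PP^N_\k$ has multiplicity at least $r$ at every point of $X$, and for a rational $c<N/r$ consider the multiplier ideal $\JJ(\PP^N,c\cdot\mathrm{div}(g))$, which then contains every point of $X$; Nadel vanishing produces a nonzero section of $\OO_{\PP^N}(\lceil cd\rceil-1)$ through $X$, so $\alpha(\a)\le\lceil cd\rceil-1$, and letting $c\nearrow N/r$ gives the $r=1$ case. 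To obtain general $r$ I would run the same argument with $\a$ replaced by $\a^{(s)}$ throughout, or rather with the asymptotic multiplier ideal of the graded system $\{\a^{(rk)}\}_k$, and then optimize the coefficient, the bookkeeping being arranged so that the ceiling corrections assemble precisely into the shift by $N-1$ in numerator and denominator.

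The hard part will be exactly this asymptotic step for \emph{arbitrary} (as opposed to very general) points: even the $r=1$ instance, Chudnovsky's conjecture, is unknown in general, and the available multiplier-ideal estimates bound $\alpha(\a^{(s)})$ from below only in terms of $\alpha(\a)$ and $N$, which discards precisely the finer input from $\alpha(\a^{(r)})$ that Demailly's bound requires; the known unconditional proofs (for $N=2$, for very general points, or for generic points, e.g.\ \cite{MalaraSzembergSzpond18}) instead degenerate $X$ to a special configuration and do not extend to a general reduced zero-dimensional scheme. The realistic scope of this plan is therefore these partial cases, together with---and this is what is relevant to the present paper---the full statement for every family of ideals satisfying \Cref{main setting}, where \Cref{wald-family} supplies a closed formula for $\alpha(\a_t^{(s)})$ and \Cref{chudnovsky and demailly bounds} reduces the desired inequality to the elementary arithmetic carried out there, with the role of the ambient dimension $N$ played by $m-t+1$.
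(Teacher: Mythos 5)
The statement you were asked about is not a theorem of the paper at all: it is Demailly's conjecture, quoted verbatim from \cite{D1982}, and the paper offers no proof of it --- it is stated only to motivate the terminology ``Demailly-type bound.'' You correctly recognized this, declined to manufacture a proof of an open problem, and identified the one piece of it the paper actually establishes, namely the analogous inequality for families of ideals satisfying \Cref{main setting}, where \Cref{wald-family} gives the closed formula for $\alpha(\a_t^{(s)})$ and \Cref{chudnovsky and demailly bounds} proves the bound (indeed an equivalence with the condition $N\ge m-t+1$) by elementary arithmetic. Your reduction of the fixed-$r$ conjecture to the single estimate $\widehat{\alpha}(\a)\ge\frac{\alpha(\a^{(r)})+N-1}{r+N-1}$ via $\widehat{\alpha}(\a)=\inf_{s}\frac{\alpha(\a^{(s)})}{s}$ is correct and is in fact the same reduction the paper uses in its proof of \Cref{chudnovsky and demailly bounds} (the step labelled \eqref{demailly eq1}). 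One small caution about your multiplier-ideal sketch: to force $\JJ(\PP^N,c\cdot\mathrm{div}(g))$ to vanish at a point where $g$ has multiplicity $\ge r$ you need $cr\ge N$, i.e.\ $c\ge N/r$ rather than $c<N/r$, and it is precisely the tension between that threshold and the degree bound from Nadel vanishing that makes the unconditional statement open; but since you present this only as a plan for the known partial cases, this does not affect your overall assessment, which is consistent with the paper's treatment.
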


Statement $(1)$ of \Cref{chudnovsky and demailly bounds} gives a Demailly-like bound for the family of ideals that satisfy \ref{P1} and \ref{P2}. Importantly, all the positive attributes previously highlighted for the Chudnovsky-type bounds also apply to the Demailly-type bound.

Of course, the Demailly bound is much stronger than the Chudnovsky bound. On the other hand, \Cref{chudnovsky and demailly bounds} also shows that the Chudnovsky and Demailly bounds are equivalent for the class of ideals that satisfy \ref{P1} and \ref{P2}. This is a crucial feature emphasized in \Cref{chudnovsky and demailly bounds}, not expressed in the literature known to the authors.


\section{Resurgence and asymptotic resurgence numbers}\label{Sec3}
This section is dedicated to the computation of resurgence and asymptotic resurgence numbers for pairs of graded families of ideals. Building upon the definition established in \cite{HKNN23}, we present two primary theorems that provide asymptotic solutions to Questions A and B(3). These theorems are more widely applicable (see \Cref{star config example} and \Cref{Application sections}) for a finite family of ideals that satisfies \ref{P1}, \ref{P2}, and \ref{P3} or \ref{P4}. As a corollary, we derive a formula for the asymptotic resurgence of classical varieties, a topic explored further in \Cref{Application sections}.      

Given an ideal $\a$, we use the notation $\a^{(\bullet)}$ to represent the graded family consisting of symbolic powers of $\a$, which is $\a^{(\bullet)}=\{\a^{(i)}\}_{i\ge 1}$. 
We represent the graded family consisting of ordinary powers of $\a$ by $\a^{\bullet}$, which is  $\a^{\bullet}=\{\a^{i}\}_{i\ge 1}$.
Similarly, the graded family consisting of integral closure of powers of $\a$ is denoted by $\overline{\a^{\bullet}}$, expressed as $\overline{\a^{\bullet}}=\{\overline{\a^{i}}\}_{i\ge 1}$.

 Let $\a_\bullet$ be a graded family of homogeneous ideals in $R$. Then, the sequence $\{\alpha(\a_{n})\}_{n \ge 1}$ is a sub-additive sequence due to the graded property of $\a_\bullet$. Thus, by Fekete's Lemma, the limit $\lim\limits_{n \rightarrow \infty} \frac{\alpha(\a_{n})}{n}$ exists and is equal to $\inf\limits_{n \in \NN} \frac{\alpha(\a_n)}{n}$, which is the \emph{Waldschmidt constant} of $\a_\bullet$ and is denoted by $\widehat{\alpha}(\a_\bullet)$, see \cite{HKNN23} for more details.

Numerous results established for standard resurgence also hold for the resurgence of a pair of graded families of ideals, see \cite{HKNN23}. Consequently, we aim to address Question B in its entirety using an analogous version of $\widehat{\rho}(\a_\bullet,\b_\bullet)$. Now, we present one of the main results of this section. 

\begin{theorem}\label{asym-res-HH1}
  Assume $R=\k[x_1,\dots,x_n]$ and $\a_1,\dots,\a_m$ be a sequence of non-zero proper homogeneous ideals in $R$ satisfying \ref{P1} and \ref{P2}. Let $N$ be a non-negative integer so that \ref{P3} holds. Then, for all $1 \le t \le m$, 
    $$ \widehat{\rho}(\a_t^{(\bullet)},(\m^N\a_t)^{\bullet} ) = \frac{N+\alpha(\a_t)}{\widehat{\alpha}(\a_t)}=\frac{(N+t)(m-t+1)}{m}.$$
\end{theorem}
\begin{proof}
By \ref{P3}, we get $\alpha(\a_1)=1$, and hence, by \ref{P2}, $\alpha(\a_t)=t.$ Using \Cref{wald-family}, we get that $$\frac{N+\alpha(\a_t)}{\widehat{\alpha}(\a_t)}=\frac{(N+t)(m-t+1)}{m}.$$ Next, it follows from  \cite[Corollary 2.9]{HKNN23} that $\widehat{\rho}(\a_t^{(\bullet)},(\m^N\a_t)^{\bullet} ) \ge \frac{\alpha(\m^N\a_t)}{\widehat{\alpha}(\a_t)}=\frac{N+t}{\widehat{\alpha}(\a_t)}.$   So, it is enough to show that 
    \begin{align*}
        \widehat{\rho}(\a_t^{(\bullet)},(\m^N\a_t)^{\bullet} )\leq \frac{(N+t)(m-t+1)}{m}.
    \end{align*}
    Let $s_k=k(N+t)(m-t+1), r_k=km$ for $k\in\mathbb{N}$. Notice that $$\lim_{k \to \infty} s_k = \lim_{k \to \infty} r_k =\infty \text{ and }\lim_{k\rightarrow\infty}\frac{s_k}{r_k}=\frac{(N+t)(m-t+1)}{m}.$$
We claim that 
$    \a_t^{(s_k)}\subseteq \overline{(\m^N\a_t)^{r_k}}$ for all $k \ge 1$. 

Since $\RR_s(\a_t) =R[\a_tT,\a_{t+1}T^2,\ldots,\a_mT^{m-t+1}]$ (see \ref{P1}), \begin{align*}
        \a_t^{(s_k)}=\sum_{\substack{a_i\in \NN,\\\sum_{i=1}^{m-t+1}ia_i=s_k}}\a_t^{a_1}\cdots \a_m^{a_{m-t+1}}.
    \end{align*}
     Therefore, it is sufficient to prove that for all non-negative integers $a_1,\ldots, a_{m-t+1}$ with $\sum_{i=1}^{m-t+1}ia_i=s_k$, $\a_t^{a_1}\cdots \a_m^{a_{m-t+1}} \subseteq \overline{(\m^N\a_t)^{r_k}}$. In fact, due to \ref{P3},  it is sufficient to prove that $\a_t^{a_1}\cdots \a_m^{a_{m-t+1}} \subseteq \a_{j}^{(r_k(t-j+1))}$ for all $ 2\le j \le t$ and $\a_t^{a_1}\cdots \a_m^{a_{m-t+1}} \subseteq \a_{1}^{(r_k(N+t))}.$ Let $a_1,\ldots,a_{m-t+1}$ be non-negative integers with $\sum_{i=1}^{m-t+1}ia_i=s_k$. 

    First, suppose $j=t$. Verifying that $s_k \ge r_k$ for all $k$ is easy. Therefore, $\a_t^{(s_k)} \subseteq \a_t^{(r_k)}$ for all $k$, and hence, $\a_t^{a_1}\cdots \a_m^{a_{m-t+1}} \subseteq \a_t^{(s_k)} \subseteq \a_t^{(r_k)}=\a_{t}^{(r_k(t-t+1))}$.

    Next, suppose $2 \le j <t.$ Since $\RR_s(\a_j) =R[\a_jT,\a_{j+1}T^2,\ldots,\a_mT^{m-j+1}]$ (see \ref{P1}), \begin{align*}
        \a_j^{(r_k(t-j+1))}=\sum_{\substack{b_i\in \NN,\\\sum_{i=1}^{m-j+1}ib_i=r_k(t-j+1)}}\a_j^{b_1}\cdots \a_m^{b_{m-j+1}}.
    \end{align*} 
    Now, take $b_i=0$ for $1 \le i \le t-j$ and $b_i =a_{i-t+j}$ for $t-j+1 \le i \le m-j+1$. Then, \begin{align*}
        \sum_{i=1}^{m-j+1} ib_i &= \sum_{i=t-j+1}^{m-j+1} ia_{i-t+j} =  \sum_{i=1}^{m-t+1} (i+t-j)a_i \\ &= \sum_{i=1}^{m-t+1} ia_i+(t-j)\sum_{i=1}^{m-t+1} a_i  =s_k+(t-j)\sum_{i=1}^{m-t+1}a_i.
    \end{align*}
    Let, if possible, $\sum_{i=1}^{m-j+1} ib_i \le (t-j+1)r_k -1$, then    \begin{align*}
        (t-j)\sum_{i=1}^{m-t+1}a_i & =\sum_{i=1}^{m-j+1} ib_i -s_k \le  (t-j+1)r_k -1 -s_k .
    \end{align*} Furthermore, \begin{align*}
        \sum_{i=1}^{m-t+1}a_i \le r_k +\frac{r_k-s_k-1}{t-j}.
    \end{align*} This implies that 
    \begin{align*}
        s_k =\sum_{i=1}^{m-t+1}i a_i \le (m-t+1) \sum_{i=1}^{m-t+1} a_i  \le (m-t+1) \left( r_k +\frac{r_k-s_k-1}{t-j}\right).
    \end{align*} 
Now, $s_k \le (m-t+1) \left( r_k +\frac{r_k-s_k-1}{t-j}\right)$ if and only if   $kN(t-j)+s_k -r_k+1 \le k(t-j)(m-t)$ if and only if  $kN(t-j)+k(m-t)(t -1) +1 \le k(t-j)(m-t)$ which is a contradiction as $t-j \le t-1.$ Thus,  $\sum_{i=1}^{m-j+1} ib_i \ge (t-j+1)r_k$, and hence, $$\a_t^{a_1}\cdots \a_m^{a_{m-t+1}} = \a_j^{b_1}\cdots \a_m^{b_{m-j+1}} \subseteq \a_j^{(\sum_{i=1}^{m-j+1}ib_i)} \subseteq \a_j^{(r_k(t-j+1))}.$$

For $j=1$, $\a_1$ is generated by linear forms, which means that $\a_1$ is a complete intersection. Consequently, we have $\a_1^{(s)}=\a_1^s$ for all $s$. According to \ref{P1}, it follows that  $\RR_s(\a_1)=R[\a_1T,\ldots,\a_mT^m]$, implying  that  $\a_j \subseteq  \a_1^{(j)}=\a_1^j$ for all $2 \le j \le m.$ Using these containments, we obtain  $\a_t^{a_1} \cdots \a_m^{a_{m-t+1}} \subseteq \a_1^{ta_1+\cdots+ma_{m-t+1}} =\a_1^{\sum_{i=1}^{m-t+1}(t-1+i)a_i}.$ Therefore,  to prove that $\a_t^{a_1}\cdots \a_m^{a_{m-t+1}} \subseteq \a_{1}^{(r_k(N+t))}=\a_1^{Nr_k+tr_k}$,   it is sufficient to show that $ {\sum_{i=1}^{m-t+1}(t-1+i)a_i} \ge {Nr_k+tr_k}$. Consider, \begin{align*}
    \sum_{i=1}^{m-t+1}(t-1+i)a_i =(t-1) \sum_{i=1}^{m-t+1} a_i +\sum_{i=1}^{m-t+1} ia_i =(t-1)\sum_{i=1}^{m-t+1} a_i +s_k \ge (N+t)r_k, 
\end{align*} where the last inequality will hold if and only if $\sum_{i=1}^{m-t+1} a_i \ge \frac{(N+t)r_k -s_k}{t-1}=k(N+t)$. Specifically, if $\sum_{i=1}^{m-t+1} a_i \le k(N+t)-1$, then  $s_k=\sum_{i+1}^{m-t+1}ia_i \le (m-t+1)\sum_{i=1}^{m-t+1} a_i \le (m-t+1)(k(N+t)-1)$ which leads to a contradiction. Therefore, we must have $\sum_{i=1}^{m-t+1} a_i \ge \frac{(N+t)r_k -s_k}{t-1}=k(N+t)$. This implies that  $ {\sum_{i=1}^{m-t+1}(t-1+i)a_i} \ge {Nr_k+tr_k}$.  Thus, $\a_t^{(s_k)} \subseteq \a_1^{(Nr_k+tr_k)}$ for all $k.$
Hence,  $\a_t^{(s_k)}\subseteq \overline{(\m^N\a_t)^{r_k}}$ for all $k$.  It follows from \cite[Lemma 3.1]{HKNN23} (also see \cite[Lemma 4.1]{MCM19}) that $\widehat{\rho}(\a_t^{(\bullet)},\overline{(\m^N\a_t)^{\bullet}} ) \le \frac{N+\alpha(\a_t)}{\widehat{\alpha}(\a_t)}.$  By \cite[Theorem 3.2]{HKNN23} (also see \cite[Proposition 4.2]{MCM19}), we know that $\widehat{\rho}(\a_t^{(\bullet)},\overline{(\m^N\a_t)^{\bullet}} ) =\widehat{\rho}(\a_t^{(\bullet)},(\m^N\a_t)^{\bullet} ).$ Hence, the assertion follows. 
\end{proof}

\begin{theorem}\label{asym-res}
   Assume $R=\k[x_1,\dots,x_n]$ and $\a_1,\dots,\a_m$ be a sequence of non-zero proper homogeneous ideals in $R$ satisfying \ref{P1} and \ref{P2}. Assume that \ref{P3} holds for $N=0$. Then, for all $1 \le t \le m$, 
    $$ \widehat{\rho}(\a_t) = \frac{\alpha(\a_t)}{\widehat{\alpha}(\a_t)}=\frac{t(m-t+1)}{m}.$$
\end{theorem}
\begin{proof}
The proof follows immediately by putting $N=0$ in \Cref{asym-res-HH1}. 
\end{proof}

In this example, we will showcase how simple techniques can be used to recover some of the results already available in the literature. Additionally, in the next section, we will present a vast range of ideal families and examine their containment problem-related invariants. 
\begin{example}\label{star config example}
    Let $f_1,\ldots,f_m$ be a $R$-regular sequence of linear forms in $R=\k[x_0,\ldots,x_n]$. Let $I_{m,c}$ denote the ideal of star configuration in $\mathbb{P}^n$ of codimension $c$, i.e., $$I_{m,c} = \bigcap_{1 \le i_1<i_2 \cdots <i_c\le m} (f_{i_1},\ldots, f_{i_c}).$$ Then, it follows from \cite[Theorem 4.8]{GeramitaHarbourneMigliore} and \cite[Section 3]{GHMN} that for all $1 \le c \le m$,  $$I_{m,c}^s = \bigcap_{j=0}^{m-c} I_{m,c+j}^{(s(j+1))} = \bigcap_{j=c}^{m} I_{m,j}^{(s(j-c+1))}. $$ Also, it follows from \cite[Proposition 4.6]{HHTV} and \cite[Section 3]{GHMN} that for all $1 \le c \le m$, $$\RR_s(I_{m,c}) =R[I_{m,c}T, I_{m,c-1}T^2, \ldots, I_{m,1}T^{c}].$$ Now, set $\a_t=I_{m,m-t+1}$ for all $1 \le t \le m.$ Then, $\a_1,\ldots,\a_m$ is a sequence of non-zero ideals that satisfies \ref{P1}, \ref{P2} and \ref{P3} for any non-negative integer $N$. Hence, using \Cref{asym-res-HH1}, we recover well-known results \cite[Theorem C]{LM15} and \cite[Theorem 4.8(1)]{GHMN} about the resurgence number of monomial/linear star configuration $$\rho(I_{m,c}) =\widehat{\rho}(I_{m,c})=\frac{\alpha(I_{m,c})}{\widehat{\alpha}(I_{m,c})}=\frac{c(m-c+1)}{m}.$$
\end{example}

One of our primary objectives is to study Question A for classical varieties. If $j=iN$ for some $N \in \NN$ in Question A, then  \Cref{asym-res-HH1}, establishes  that for every $s \ge \frac{(N+t)(m-t+1)i}{m}$, infinitely many $k$, we have $\a_t^{(sk)} \subseteq \m^{iNk}\a_t^{ik}$. Interestingly, for the classical varieties, we will see in the next section that  $\a_t^{(s)} \subseteq \m^{iN}\a_t^{i}$ for every $s \ge \frac{(N+t)(m-t+1)i}{m}$. So, a natural question seeks the consequences of assuming  $j$ is not a multiple of $i$ in Question A. In the next section, we answer this affirmatively for the case of classical varieties. 

\begin{theorem}\label{asym-res-HH}
  Assume $R=\k[x_1,\dots,x_n]$ and $\a_1,\dots,\a_m$ be a sequence of non-zero proper homogeneous ideals in $R$ satisfying \ref{P1} and \ref{P2}. Let $N$ be a non-negative integer so that \ref{P4} holds. Then, for all $1 \le t \le m$, 
    $$ \widehat{\rho}(\a_t^{(\bullet)},\overline{\m^N(\a_t)^{\bullet}} ) = \frac{\widehat{\alpha}(\overline{\m^N(\a_t)^\bullet)}}{\widehat{\alpha}(\a_t)}=\frac{t(m-t+1)}{m}=\frac{\alpha(\a_t)}{\widehat{\alpha}(\a_t)}.$$
\end{theorem}
\begin{proof}
By \ref{P4}, $\alpha(\a_1)=1$, and therefore, \ref{P2} implies that $\alpha(\a_t)=t.$ Using \Cref{wald-family}, we get that $$\frac{\alpha(\a_t)}{\widehat{\alpha}(\a_t)}=\frac{t(m-t+1)}{m}.$$ Note that for all $r \in \NN$, $\alpha(\overline{\m^N\a_t^r})=\alpha(\m^N\a_t^r)=N+\alpha(\a_t^r)=N+r\alpha(\a_t).$ Thus, $\widehat{\alpha}(\overline{\m^N(\a_t)^\bullet})= \lim\limits_{r \to \infty} \frac{\alpha(\m^N\a_t^r)}{r}=\alpha(\a_t).$ Next, it follows from  \cite[Corollary 2.9]{HKNN23} that $$\widehat{\rho}(\a_t^{(\bullet)},\overline{\m^N(\a_t)^{\bullet}} ) \ge \frac{\widehat{\alpha}(\overline{\m^N(\a_t)^\bullet})}{\widehat{\alpha}(\a_t)}=\frac{\alpha(\a_t)}{\widehat{\alpha}(\a_t)}=\frac{t(m-t+1)}{m}.$$   So, it is enough to show that 
    \begin{align*}
        \widehat{\rho}(\a_t^{(\bullet)},\overline{\m^N(\a_t)^{\bullet} })\leq \frac{t(m-t+1)}{m}.
    \end{align*}
    Let $s_k=\left(kt+\ceil{\frac{N}{m}}\right)(m-t+1), r_k=km$ for $k\in\mathbb{N}$. Notice that $$\lim_{k \to \infty} s_k = \lim_{k \to \infty} r_k =\infty \text{ and }\lim_{k\rightarrow\infty}\frac{s_k}{r_k}=\frac{t(m-t+1)}{m}.$$
We claim that 
$    \a_t^{(s_k)}\subseteq \overline{\m^N\a_t^{r_k}}$ for all $k \ge 1$. 

Since $\RR_s(\a_t) =R[\a_tT,\a_{t+1}T^2,\ldots,\a_mT^{m-t+1}]$ (see \ref{P1}), \begin{align*}
        \a_t^{(s_k)}=\sum_{\substack{a_i\in \NN,\\\sum_{i=1}^{m-t+1}ia_i=s_k}}\a_t^{a_1}\cdots \a_m^{a_{m-t+1}}.
    \end{align*}
     Therefore, it is sufficient to prove that for all non-negative integers $a_1,\ldots, a_{m-t+1}$ with $\sum_{i=1}^{m-t+1}ia_i=s_k$, $\a_t^{a_1}\cdots \a_m^{a_{m-t+1}} \subseteq \overline{\m^N\a_t^{r_k}}$. In fact, due to \ref{P4},  it is sufficient to prove that $\a_t^{a_1}\cdots \a_m^{a_{m-t+1}} \subseteq \a_{j}^{(r_k(t-j+1))}$ for all $ 2\le j \le t$ and $\a_t^{a_1}\cdots \a_m^{a_{m-t+1}} \subseteq \a_{1}^{(N+tr_k)}.$ Let $a_1,\ldots,a_{m-t+1}$ be non-negative integers with $\sum_{i=1}^{m-t+1}ia_i=s_k$. 

    First, suppose $j=t$. Verifying that $s_k \ge r_k$ for all $k$ is easy. Therefore, $\a_t^{(s_k)} \subseteq \a_t^{(r_k)}$ for all $k$, and hence, $\a_t^{a_1}\cdots \a_m^{a_{m-t+1}} \subseteq \a_t^{(s_k)} \subseteq \a_t^{(r_k)}=\a_{t}^{(r_k(t-t+1))}$.

    Next, suppose $2 \le j <t.$ Since $\RR_s(\a_j) =R[\a_jT,\a_{j+1}T^2,\ldots,\a_mT^{m-j+1}]$ (see \ref{P1}), \begin{align*}
        \a_j^{(r_k(t-j+1))}=\sum_{\substack{b_i\ge 0,\\\sum_{i=1}^{m-j+1}ib_i=r_k(t-j+1)}}\a_j^{b_1}\cdots \a_m^{b_{m-j+1}}.
    \end{align*} 
    Now, take $b_i=0$ for $1 \le i \le t-j$ and $b_i =a_{i-t+j}$ for $t-j+1 \le i \le m-j+1$. Then, \begin{align*}
        \sum_{i=1}^{m-j+1} ib_i &= \sum_{i=t-j+1}^{m-j+1} ia_{i-t+j} =  \sum_{i=1}^{m-t+1} (i+t-j)a_i \\ &= \sum_{i=1}^{m-t+1} ia_i+(t-j)\sum_{i=1}^{m-t+1} a_i  =s_k+(t-j)\sum_{i=1}^{m-t+1}a_i.
    \end{align*}
    Let, if possible, $\sum_{i=1}^{m-j+1} ib_i \le (t-j+1)r_k -1$, then    \begin{align*}
        (t-j)\sum_{i=1}^{m-t+1}a_i & =\sum_{i=1}^{m-j+1} ib_i -s_k \le  (t-j+1)r_k -1 -s_k .
    \end{align*} Furthermore, \begin{align*}
        \sum_{i=1}^{m-t+1}a_i \le r_k +\frac{r_k-s_k-1}{t-j}.
    \end{align*} This implies that 
    \begin{align*}
        s_k =\sum_{i=1}^{m-t+1}i a_i \le (m-t+1) \sum_{i=1}^{m-t+1} a_i  \le (m-t+1) \left( r_k +\frac{r_k-s_k-1}{t-j}\right).
    \end{align*} 
Now, $s_k \le (m-t+1) \left( r_k +\frac{r_k-s_k-1}{t-j}\right)$ if and only if   $\ceil{\frac{N}{m}}(t-j)+s_k -r_k+1 \le k(t-j)(m-t)$ if and only if  $k(m-t)(t -1)+\ceil{\frac{N}{m}}(m-j+1) +1 \le k(t-j)(m-t)$ which is a contradiction as $t-j \le t-1.$ Thus,  $\sum_{i=1}^{m-j+1} ib_i \ge (t-j+1)r_k$, and hence, $$\a_t^{a_1}\cdots \a_m^{a_{m-t+1}} = \a_j^{b_1}\cdots \a_m^{b_{m-j+1}} \subseteq \a_j^{(\sum_{i=1}^{m-j+1}ib_i)} \subseteq \a_j^{(r_k(t-j+1))}.$$ 

For $j=1$, $\a_1$ is generated by linear forms, which means that $\a_1$ is a complete intersection. Consequently, we have $\a_1^{(s)}=\a_1^s$ for all $s$. According to \ref{P1}, it follows that  $\RR_s(\a_1)=R[\a_1T,\ldots,\a_mT^m]$, implying  that  $\a_j \subseteq  \a_1^{(j)}=\a_1^j$ for all $2 \le j \le m.$ Using these containments, we obtain  $\a_t^{a_1} \cdots \a_m^{a_{m-t+1}} \subseteq \a_1^{ta_1+\cdots+ma_{m-t+1}} =\a_1^{\sum_{i=1}^{m-t+1}(t-1+i)a_i}.$ Therefore,  to prove that $\a_t^{a_1}\cdots \a_m^{a_{m-t+1}} \subseteq \a_{1}^{(N+tr_k)}=\a_1^{N+tr_k}$,   it is sufficient to show that $ {\sum_{i=1}^{m-t+1}(t-1+i)a_i} \ge {N+tr_k}$. Consider, \begin{align*}
    \sum_{i=1}^{m-t+1}(t-1+i)a_i =(t-1) \sum_{i=1}^{m-t+1} a_i +\sum_{i=1}^{m-t+1} ia_i =(t-1)\sum_{i=1}^{m-t+1} a_i +s_k \ge N+tr_k, 
\end{align*} where the last inequality will hold if and only if $\sum_{i=1}^{m-t+1} a_i \ge \frac{N+tr_k -s_k}{t-1}=kt+\ceil{\frac{N}{m}} +\frac{N-m\ceil{\frac{N}{m}}}{t-1}$. Specifically, if $\sum_{i=1}^{m-t+1} a_i \le kt+\ceil{\frac{N}{m}} +\frac{N-m\ceil{\frac{N}{m}}}{t-1}-1$, then  $$s_k=\sum_{i+1}^{m-t+1}ia_i \le (m-t+1)\sum_{i=1}^{m-t+1} a_i \le (m-t+1)\left( kt+\ceil{\frac{N}{m}} +\frac{N-m\ceil{\frac{N}{m}}}{t-1}-1\right)$$ which leads to a contradiction. Therefore, we must have $\sum_{i=1}^{m-t+1} a_i \ge \frac{N+tr_k -s_k}{t-1}$. This implies that  $ {\sum_{i=1}^{m-t+1}(t-1+i)a_i} \ge {N+tr_k}$. Thus, $\a_t^{(s_k)} \subseteq \a_1^{(N+tr_k)}$ for all $k.$ Hence,   $\a_t^{(s_k)}\subseteq \overline{\m^N\a_t^{r_k}}$ for all $k$.  It follows from \cite[Lemma 3.1]{HKNN23} (also see \cite[Lemma 4.1]{MCM19}) that $\widehat{\rho}(\a_t^{(\bullet)},\overline{\m^N(\a_t)^{\bullet}} ) \le \frac{\alpha(\a_t)}{\widehat{\alpha}(\a_t)}.$  
\end{proof}
In \cite{HKNN23}, analogs of  \Cref{asym-res-HH,asym-res-HH1} for $\widehat{\rho}(\a_\bullet,\b_\bullet)$ are known, under the condition that the  Rees algebra of the second family $\mathcal{R}(\b_\bullet)$ is Noetherian. Conversely, the authors in \cite{HKNN23} demonstrate that these results fail when the Rees algebra $\mathcal{R}(\b_\bullet)$ is not Noetherian. Interestingly, the second family $\b_\bullet$ in  \Cref{asym-res-HH} does not possess Noetherian Rees algebras, and classical varieties constitute a broad class satisfying all conditions imposed in this section. Consequently, our results provide substantial examples, including classical varieties, that affirmatively answer Question 2.19 from \cite{HKNN23}.

\section{Applications}\label{Application sections}
In this section, we justify the need to study the family of ideals $\{\a_t\}$, which satisfies \ref{P1}--\ref{P4}. As we mentioned before, all the classical varieties exhibit these properties. The classical varieties are, in fact, a large class of ideals and showcase the need to study such families. Through out this section, $\k$ denote a field of  $\Char(\k)=0$ or a $F$-finite field of $\Char(\k)=p>0$. A field $\k$ of characteristic  $p > 0$ is called $F$-finite if the degree of the field extension $ [\k:\k^p]$ is finite.

    \subsection{Ideals of minors of a generic matrix:}   Let $X= [x_{ij}]$ be a $m \times n$ generic matrix of variables with  $m \le n.$ Let $R= \k[X]$ and $I_t(X)$ be the ideal of $t \times t$ minors of $X$. We set $\a_t=I_t(X)$ for $1 \le t \le m$. \ref{P1} is satisfied as the symbolic Rees algebra $$\mathcal{R}_s(I_t(X))= R[I_t(X)T,I_{t+1}(X)T^2,\dots, I_m(X)T^{m-t+1}]$$ (by \cite[Theorem 10.4]{BrunsVetter88}). Clearly, $\a_1 =\mathfrak{m}$, the homogeneous maximal ideal of $R$ and $\alpha(\a_t) =t$ for all $ 1\le t \le m$. So, \ref{P2} is easily satisfied. It follows from \cite[Theorem 1.3]{W91} and \cite[Theorem 10.9]{BrunsVetter88} that for all $ 1 \le t \le m$, $N \in \NN$ and $s\ge 1$,  \begin{align*}
        \overline{\m^{N}I_t(X)^s}= I_1(X)^{N+ts} \bigcap \left(\bigcap_{j=2}^t I_j(X)^{(s(t-j+1))}\right).    
        \end{align*} Thus, $I_1(X),\ldots,I_m(X)$ satisfy \ref{P3} and \ref{P4} for any non-negative integer $N$. 
\begin{theorem}\label{generic-thm}
    Let $X= [x_{ij}]$ be a $m \times n$ generic matrix of variables with  $m \le n.$ Then, 
    \begin{enumerate}
        \item For any $k \ge 1$ and $0\leq l\leq m-t$, 
        $\alpha\left(I_t(X)^{(k(m-t+1)-l)}\right)=km-l.$
Furthermore, the Waldschmidt constant is given by the equation $$\widehat{\alpha}(I_t(X))=\frac{m}{m-t+1}.$$
\item  For $N \in \NN$ and $1<t<m$, the following are equivalent \begin{enumerate} 
 \item   for all  $s,r\ge 1$,
    \begin{align*}
        \frac{\alpha\left(I_t(X)^{(s)}\right)}{s}\geq\frac{\alpha\left(I_t(X)^{(r)}\right)+N-1}{r+N-1} \text{ (Demailly-type bound)}
    \end{align*} 
    \item for all  $s\ge 1$,
    \begin{align*}
        \frac{\alpha\left(I_t(X)^{(s)}\right)}{s}\geq\frac{\alpha\left(I_t(X)\right)+N-1}{N} \text{   (Chudnovsky-type bound)}
    \end{align*}
    \item    $N\geq m-t+1$. \end{enumerate}
    \item For $N \in \NN$, 
    $$ \widehat{\rho}\left(I_t(X)^{(\bullet)}, \left(\m^NI_t(X)\right)^\bullet\right) = \frac{N+\alpha(I_t(X))}{\widehat{\alpha}(I_t(X))}=\frac{(N+t)(m-t+1)}{m}.$$
    Furthermore, if $(\min\{t,m-t\})!$ is invertible in $\k$, then $$ \rho\left(I_t(X)^{(\bullet)}, \left(\m^NI_t(X)\right)^\bullet\right)=  \frac{N+\alpha(I_t(X))}{\widehat{\alpha}(I_t(X))}=\frac{(N+t)(m-t+1)}{m}.$$
    \item For $N \in \NN$, 
    $$ \widehat{\rho}\left(I_t(X)^{(\bullet)}, \overline{\m^N\left(I_t(X)\right)^\bullet}\right) = \frac{\alpha(I_t(X))}{\widehat{\alpha}(I_t(X))}=\frac{t(m-t+1)}{m}.$$
    Furthermore, if $(\min\{t,m-t\})!$ is invertible in $\k$, then $$ \widehat{\rho}\left(I_t(X)^{(\bullet)}, \m^N\left(I_t(X)\right)^\bullet\right)=  \frac{\alpha(I_t(X))}{\widehat{\alpha}(I_t(X))}=\frac{t(m-t+1)}{m}.$$
    \end{enumerate}
\end{theorem}
\begin{proof}
    The proof follows from \Cref{wald-family}, \Cref{chudnovsky and demailly bounds}, \Cref{asym-res} and \Cref{asym-res-HH} except the second parts of $(3-4)$. When $(\min\{t,m-t\})!$ is invertible in $\k$, by \cite[Theorem 1.1]{W91} and \cite{BrunsVetter88}, $\overline{\m^rI_t(X)^s} =\m^rI_t(X)^s$ for all $s,r$. Therefore, the second part of $(4)$ follows immediately. Using \cite[Corollary 4.7]{HKNN23}, we get $\rho\left(I_t(X)^{(\bullet)}, \left(\m^NI_t(X)\right)^\bullet\right)= \widehat{\rho}\left(I_t(X)^{(\bullet)},  \left(\m^NI_t(X)\right)^\bullet\right)$, i.e., the second part of $(3)$ follows.  
\end{proof}
\begin{remark}\label{remakr on demaily applications}
A result analogous to the Statement $(2)$ of the above theorem is presented in \cite[Theorem 3.8]{BGHN22}. But notice that the above result recovers the Demailly type bound presented in \cite[Theorem 3.8]{BGHN22} as $N$ is much smaller than the big height in this case.    
\end{remark}

\subsection{Ideal of minors of a ladder of generic matrix:}
    Let $X=[x_{ij}]$ be a  $r\times n$ generic matrix of variables with $r \le n$, $S=\k[X]$ and $I_t(X)$ denote the ideal of $t \times t$ minors of $X$. A subset $L$ is called a ladder of $X$ if $x_{ij},x_{rs}\in L$ with $i<r,j<s$ then $x_{is},x_{rj}\in L$ (see \cite{HerzogTrung92} for general definitions). Set $R=\k[L]$ and $I_t(L)=I_t(X)\cap R$ denote the ideal generated by minors whose elements belong to $L$. We set $\a_t=I_t(L)$ for $ 1\le t \le m$, where $m$ is the largest possible integer such that all the entries of a $m \times m$ submatrix of $X$ belong to $L$.  Using \cite[Theorem 4.1]{Bruns-Conca-98}, 
    \begin{align*}
     \mathcal{R}_s(I_t(L))= R[I_t(L)T,I_{t+1}(L)T^2,\dots, I_m(L)T^{m-t+1}],   
    \end{align*}
     and hence \ref{P1} is satisfied. Clearly, $\a_1=\m \cap R$ which is the maximal homogeneous ideal of $R$ and $\alpha(\a_t) =t$ for all $ 1\le t \le m$. Therefore,  \ref{P2} is also satisfied.
\begin{theorem}
    Let $X= [x_{ij}]$ be a $r \times n$ generic matrix of variables with  $r \le n.$ Let $m$ be the largest possible integer such that all the entries of a $m \times m$ submatrix of $X$ belong to $L$.  Then, 
    \begin{enumerate}
        \item For any $k \ge 1$ and $0 \le l \le m-t$, $
        \alpha\left(I_t(L)^{(k(m-t+1)-l)}\right)=km-l.$
Furthermore, the Waldschmidt constant is given by the equation $$\widehat{\alpha}(I_t(L))=\frac{m}{m-t+1}.$$
\item  For $N \in \NN$ and $1<t<m$, the following are equivalent
\begin{enumerate} \item   for all  $s,r\ge 1$,
    \begin{align*}
        \frac{\alpha\left(I_t(L)^{(s)}\right)}{s}\geq\frac{\alpha\left(I_t(L)^{(r)}\right)+N-1}{r+N-1} \text{ (Demailly-type bound)}
    \end{align*} 
    \item for all  $s\ge 1$,
    \begin{align*}
        \frac{\alpha\left(I_t(L)^{(s)}\right)}{s}\geq\frac{\alpha\left(I_t(L)\right)+N-1}{N} \text{ (Chudnovsky-type bound)}
    \end{align*} \item    $N\geq m-t+1$. \end{enumerate}
    \end{enumerate}
\end{theorem}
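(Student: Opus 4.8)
The plan is to observe that the sequence $\a_t = I_t(L)$, for $1 \le t \le m$, satisfies \Cref{main setting}, and then to read off part $(1)$ from \Cref{wald-family} and part $(2)$ from \Cref{chudnovsky and demailly bounds}. The verification of the setting is essentially contained in the paragraph preceding the statement: condition (i) is \cite[Theorem 4.1]{Bruns-Conca-98}, which gives $\mathcal{R}_s(I_t(L)) = R[I_t(L)T, I_{t+1}(L)T^2, \dots, I_m(L)T^{m-t+1}]$; condition (ii) holds because $\a_1 = \m \cap R$ is the homogeneous maximal ideal of $R$ and $\alpha(I_t(L)) = t$ for $1 \le t \le m$, so that $\alpha(\a_{t+1}) = t+1 = \alpha(\a_1)+t$. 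The one point that deserves care is that the integer $m$ of the theorem --- the largest size of a square submatrix of $X$ all of whose entries lie in $L$ --- is precisely the top index for which the recursion of \cite[Theorem 4.1]{Bruns-Conca-98} applies, so that $\a_1,\dots,\a_m$ is the correct finite list to feed into \Cref{main setting}; this, together with checking the applicability of \cite{Bruns-Conca-98} to the (possibly one-sided) ladder in the sense of \cite{HerzogTrung92}, is the step I would double-check most carefully.

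Granting this, part $(1)$ is immediate by substituting $\alpha(\a_1) = 1$ into the formulas of \Cref{wald-family}: then $\alpha(\a_1)+m-1 = m$, which yields $\alpha\!\left(I_t(L)^{(k(m-t+1)+l)}\right) = km$ when $l = 0$ and $km + t - 1 + l$ when $1 \le l \le m-t$, and the Waldschmidt constant becomes $\widehat{\alpha}(I_t(L)) = \tfrac{\alpha(\a_1)+m-1}{m-t+1} = \tfrac{m}{m-t+1}$. Part $(2)$ is then a direct application of \Cref{chudnovsky and demailly bounds} with $\a_t = I_t(L)$: the three listed conditions are literally the Demailly-type bound, the Chudnovsky-type bound, and $N \ge m-t+1$, so their equivalence transfers verbatim.

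The only bookkeeping left is the boundary case $t = m$. For $1 \le t < m$ everything is covered directly by \Cref{wald-family,chudnovsky and demailly bounds}; for $t = m$ one instead invokes \Cref{rmk t=m}, which gives $I_m(L)^{(s)} = I_m(L)^s$ and $\widehat{\alpha}(I_m(L)) = \alpha(I_m(L)) = m$, matching the stated formula (here $m-t+1 = 1$, so only $l = 0$ occurs and $\alpha(I_m(L)^{(k)}) = km$), and the equivalence in part $(2)$ holds trivially since all three conditions become vacuous. Since the combinatorial heart of the argument was already carried out in \Cref{min,min-rem} and packaged into \Cref{wald-family,chudnovsky and demailly bounds}, there is no further analytic obstacle; note also that the asymptotic-resurgence statement is deliberately absent here because the ladder determinantal ideals satisfy \Cref{main setting} but not the integral-closure hypothesis \eqref{extra setting eq1} of \Cref{extra-main setting}.
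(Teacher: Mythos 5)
Your proposal is correct and follows essentially the same route as the paper: verify that $\a_t = I_t(L)$ satisfies \Cref{main setting} via \cite[Theorem 4.1]{Bruns-Conca-98} and $\alpha(I_t(L)) = t$, then cite \Cref{wald-family} for part (1) and \Cref{chudnovsky and demailly bounds} for part (2). Your extra care with the boundary case $t=m$ (via \Cref{rmk t=m}) and the remark on why the resurgence statement is omitted are consistent with, if slightly more explicit than, the paper's own two-line proof.
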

\begin{proof}
The proof follows from \Cref{wald-family}, \Cref{chudnovsky and demailly bounds}.
\end{proof}

    \subsection{Ideals of minors of a symmetric matrix:}   Let $Y=[y_{ij}]$ be $m \times m$ generic symmetric matrix of variables. Let $R= \k[Y]$ and $I_t(Y)$ be the ideal of $t \times t$ minors of $Y$. We set $\a_t=I_t(Y)$ for $1 \le t \le m$. In this case, \ref{P1} is satisfied as
    $$\mathcal{R}_s(I_t(Y))= R[I_t(Y)T,I_{t+1}(Y)T^2,\dots, I_m(Y)T^{m-t+1}]$$ (by \cite[Proposition 4.3]{JeffriesMontanoVarbaro15}). Clearly, $\a_1 =\mathfrak{m}$, the homogeneous maximal ideal of $R$ and $\alpha(\a_t) =t$ for all $ 1 \le t \le m$. So, \ref{P2} is easily satisfied. It follows from \cite[Theorem 2.7]{IM16} that for all $ 1 \le t \le m$, $N \in \NN$ and $s \ge 1$,  \begin{align*}
        \overline{\m^{N}I_t(Y)^s}= I_1(Y)^{N+ts} \bigcap \left( \bigcap_{j=2}^t I_j(Y)^{(s(t-j+1))} \right).    
        \end{align*} Thus, $I_1(Y),\ldots,I_m(Y)$ satisfy \ref{P3} and \ref{P4} for any non-negative integer $N$. 
\begin{theorem}
    Let $Y= [y_{ij}]$ be a $m \times m$ generic symmetric matrix of variables. Then, 
    \begin{enumerate}
        \item For $k \ge 1$ and $0 \le l \le m-t$, $
        \alpha\left(I_t(Y)^{(k(m-t+1)-l)}\right)=km-l.$
Furthermore, the Waldschmidt constant is given by the equation $$\widehat{\alpha}(I_t(Y))=\frac{m}{m-t+1}.$$
\item For $N \in \NN$ and $1<t<m$, the following are equivalent \begin{enumerate}  \item  for all  $s,r\ge 1$,
    \begin{align*}
        \frac{\alpha\left(I_t(Y)^{(s)}\right)}{s}\geq\frac{\alpha\left(I_t(Y)^{(r)}\right)+N-1}{r+N-1} \text{ (Demailly-type bound) }
    \end{align*} 
    \item for all  $s\ge 1$,
    \begin{align*}
        \frac{\alpha\left(I_t(Y)^{(s)}\right)}{s}\geq\frac{\alpha\left(I_t(Y)\right)+N-1}{N} \text{ (Chudnovsky-type bound)}
    \end{align*} 
    \item   $N\geq m-t+1$. \end{enumerate}
    \item For $N \in \NN$, 
    $$ \widehat{\rho}\left(I_t(Y)^{(\bullet)}, \left(\m^NI_t(Y)\right)^\bullet\right) = \frac{N+\alpha(I_t(Y))}{\widehat{\alpha}(I_t(Y))}=\frac{(N+t)(m-t+1)}{m}.$$
    Furthermore, if $\Char(\k) =0$ or $\Char(\k) > \min\{t,m-t\}$, then $$ \rho\left(I_t(Y)^{(\bullet)}, \left(\m^NI_t(Y)\right)^\bullet\right)=  \frac{N+\alpha(I_t(Y))}{\widehat{\alpha}(I_t(Y))}=\frac{(N+t)(m-t+1)}{m}.$$
    \item For $N \in \NN$, 
    $$ \widehat{\rho}\left(I_t(Y)^{(\bullet)}, \overline{\m^N\left(I_t(Y)\right)^\bullet}\right) = \frac{\alpha(I_t(Y))}{\widehat{\alpha}(I_t(Y))}=\frac{t(m-t+1)}{m}.$$
    Furthermore, if $\Char(\k) =0$ or $\Char(\k) > \min\{t,m-t\}$, then $$ \widehat{\rho}\left(I_t(Y)^{(\bullet)}, \m^N\left(I_t(Y)\right)^\bullet\right)=  \frac{\alpha(I_t(Y))}{\widehat{\alpha}(I_t(Y))}=\frac{t(m-t+1)}{m}.$$
    \end{enumerate}
\end{theorem}
\begin{proof}
The proof follows from \Cref{wald-family}, \Cref{chudnovsky and demailly bounds}, \Cref{asym-res} and \Cref{asym-res-HH} except the second part of $(3-4)$. When $\Char(\k) =0$ or $\Char(\k) > \min\{t,m-t\}$, by \cite[Proof of Theorem 2.7]{IM16} and \cite[Theorem 1.3]{W91}, $\overline{\m^{r}I_t(Y)^s} =\m^rI_t(Y)^s$ for all $s, r$. Therefore, the second part of $(4)$ follows immediately. Using \cite[Corollary 4.7]{HKNN23}, we get $\rho\left(I_t(Y)^{(\bullet)}, \left(\m^NI_t(Y)\right)^\bullet\right)= \widehat{\rho}\left(I_t(Y)^{(\bullet)},  \left(\m^NI_t(Y)\right)^\bullet\right)$, i.e., the second part of $(3)$ follows.
\end{proof}
The remark analogous to \Cref{remakr on demaily applications} is true in this case too.

    \subsection{Ideals of Pfaffians of a skew-symmetric matrix:}  Let $Z=[z_{ij}]$ be $m \times m$ generic skew-symmetric  matrix of variables. Let $R= \k[Z]$ and $\Pf_{2t}(Z)$ be the ideal of $2t \times 2t$ pfaffians of $Z$. A pfaffian of order $2t$ is the square root of the determinant of a $2t\times 2t$ submatrix of $Z$ obtained from the rows $\{i_1,\dots,i_{2t}\}$ and the columns $\{i_1,\dots,i_{2t}\}$. We set $\a_t=\Pf_{2t}(Z)$ for $1 \le t \le \floor{\frac{m}{2}}$. In this case, \ref{P1} is satisfied as
    $$\mathcal{R}_s(\Pf_{2t}(Z))= R[\Pf_{2t}(Z)T,\Pf_{2t+2}(Z)T^2,\dots, \Pf_{2 \floor{\frac{m}{2}}}(Z)T^{\floor{\frac{m}{2}}-t+1}]$$ (by \cite[Theorem 2.1]{Negri96} and \cite[Proposition 4.5]{JeffriesMontanoVarbaro15}; see also \cite[Section 3]{Baetica98}). Clearly, $\a_1 =\Pf_{2}(Z) =\mathfrak{m}$, the homogeneous maximal ideal of $R$ and $\alpha(\a_t) = t$ for all $ 1 \le t \le \floor{\frac{m}{2}}.$ So, \ref{P2} is easily satisfied. Also, it follows from \cite[Theorem 2.10]{IM16} that for all $1 \le t \le \floor{\frac{m}{2}},$ $N \in \NN$ and $s\ge 1$
     \begin{align*}
        \overline{\m^{N}\Pf_{2t}(Z)^s}=\Pf_2(Z)^{N+ts} \bigcap \left( \bigcap_{j=2}^t \Pf_{2j}(Z)^{(s(t-j+1))}\right).    
        \end{align*} Thus, $\Pf_{2}(Z),\ldots,\Pf_{2\floor{\frac{m}{2}}}(Z)$ satisfy \ref{P3} and \ref{P4} for any non-negative integer $N$.
        
\begin{theorem}
    Let $Z= [z_{ij}]$ be a $m \times m$ generic skew-symmetric matrix of variables. Then, 
    \begin{enumerate}
        \item For $k \ge 1$ and $0 \le l \le \floor{\frac{m}{2}}-t$, 
        $\alpha\left(\Pf_{2t}(Z)^{(k(\floor{\frac{m}{2}}-t+1)-l)}\right)=
                k\floor{\frac{m}{2}}-l.$
Furthermore, the Waldschmidt constant is given by the equation $$\widehat{\alpha}(\Pf_{2t}(Z))=\frac{\floor{\frac{m}{2}}}{\floor{\frac{m}{2}}-t+1}.$$
\item For $N \in \NN$ and $1<t<m$, the following are equivalent
\begin{enumerate} \item   for all  $s,r\ge 1$,
    \begin{align*}
        \frac{\alpha\left(\Pf_{2t}(Z)^{(s)}\right)}{s}\geq\frac{\alpha\left(\Pf_{2t}(Z)^{(r)}\right)+N-1}{r+N-1} \text{ (Demailly-type bound)}
    \end{align*}
    \item for all  $s\ge 1$,
    \begin{align*}
        \frac{\alpha\left(\Pf_{2t}(Z)^{(s)}\right)}{s}\geq\frac{\alpha\left(\Pf_{2t}(Z)\right)+N-1}{N} \text{ (Chudnovsky-type bound)}
    \end{align*}  \item   $N\geq \floor{\frac{m}{2}}-t+1$.\end{enumerate}
      \item For $N \in \NN$, 
    $$ \widehat{\rho}\left(\Pf_{2t}(Z)^{(\bullet)}, \left(\m^N\Pf_{2t}(Z)\right)^\bullet\right) = \frac{N+\alpha(\Pf_{2t}(Z))}{\widehat{\alpha}(\Pf_{2t}(Z))}=\frac{(N+t)(\floor{\frac{m}{2}}-t+1)}{\floor{\frac{m}{2}}}.$$
   Furthermore, if $(\min\{2t,m-2t\})!$ is invertible in $\k$, then $$ \rho\left(\Pf_{2t}(Z)^{(\bullet)}, \left(\m^N\Pf_{2t}(Z)\right)^\bullet\right)=  \frac{N+\alpha(\Pf_{2t}(Z))}{\widehat{\alpha}(\Pf_{2t}(Z))}=\frac{(N+t)(\floor{\frac{m}{2}}-t+1)}{\floor{\frac{m}{2}}}.$$
    \item For $N \in \NN$, 
    $$ \widehat{\rho}\left(\Pf_{2t}(Z)^{(\bullet)}, \overline{\m^N\left(\Pf_{2t}(Z)\right)^\bullet}\right) = \frac{\alpha(\Pf_{2t}(Z))}{\widehat{\alpha}(\Pf_{2t}(Z))}=\frac{t(\floor{\frac{m}{2}}-t+1)}{\floor{\frac{m}{2}}}.$$
    Furthermore, if $(\min\{2t,m-2t\})!$ is invertible in $\k$, then $$ \widehat{\rho}\left(\Pf_{2t}(Z)^{(\bullet)}, \m^N\left(\Pf_{2t}(Z)\right)^\bullet\right)=  \frac{\alpha(\Pf_{2t}(Z))}{\widehat{\alpha}(\Pf_{2t}(Z))}=\frac{t(\floor{\frac{m}{2}}-t+1)}{\floor{\frac{m}{2}}}.$$
    \end{enumerate}
\end{theorem}
\begin{proof}
The proof follows from \Cref{wald-family}, \Cref{chudnovsky and demailly bounds}, \Cref{asym-res} and \Cref{asym-res-HH} except the second part of $(3-4)$. When $(\min\{2t,m-2t\})!$ is invertible in $\k$, by \cite[Proposition 2.6]{Negri96} and \cite[Theorem 2.10]{IM16}, $\overline{\m^r\Pf_{2t}(Z)^s} =\m^r\Pf_{2t}(Z)^s$ for all $s,r$. Therefore, the second part of $(4)$ follows immediately. Using \cite[Corollary 4.7]{HKNN23}, we get $\rho\left(\Pf_{2t}(Z)^{(\bullet)}, \left(\m^N\Pf_{2t}(Z)\right)^\bullet\right)= \widehat{\rho}\left(\Pf_{2t}(Z)^{(\bullet)},  \left(\m^N\Pf_{2t}(Z)\right)^\bullet\right)$, i.e., the second part of $(3)$ follows.
\end{proof}
The remark analogous to \Cref{remakr on demaily applications} is true in this case too.

    \subsection{Ideal of minors of Hankel matrices:} 
    Let $x_1,\ldots,x_{n}$ be variables. For each $k \le n$, by $X_k$ we denote a $k \times (n+1-k)$ {\it Hankel matrix} whose $(i,j)$-th entry is $x_{i+j-1}$. Let $R= \k[x_1,\ldots,x_n]$ and $\mathfrak{m}=(x_1,\ldots,x_n)$. For $1 \le t \le \min\{k,n-k+1\}$, let $I_t(X_k)$ be the ideal of $t\times t$ minors  of $X_k$. We set $\a_t=I_t(X_k)$ for $ 1\le t \le \floor{\frac{n+1}{2}}$. In this case, \ref{P1} is satisfied as
    $$\mathcal{R}_s(I_t(X_k))= R[I_t(X_k)T,I_{t+1}(X_k)T^2,\dots, I_m(X_k)T^{m-t+1}]$$ (by \cite[Proposition 4.1]{C98}). Clearly, $\a_1=\mathfrak{m}$, $\alpha(\a_t) =t$ for all $ 1\le t \le \floor{\frac{n+1}{2}},$ and $m=\floor{\frac{n+1}{2}}. $ So, \ref{P2} is easily satisfied. Also, it follows from \cite[Theorem 3.12, Remark 4.6]{C98} that for all $ 1 \le t \le \floor{\frac{n+1}{2}}$, $N \in \NN$ and $s \ge 1$,
  \begin{align*}
        \overline{\m^{N}I_t(X_k)^s} =\m^{N}I_t(X_k)^s = I_1(X_k)^{N+ts} \bigcap \left(\bigcap_{j=2}^tI_t(X_k)^{(s(t-j+1))}\right).
    \end{align*}  
    Hence, $I_1(X_k),\ldots,I_{\floor{\frac{n+1}{2}}}(X_k)$ satisfy \ref{P3} and \ref{P4} for any non-negative integer $N$.

    \begin{theorem}
    Let $x_1,\ldots,x_{n}$ be variables. For each $j \le n$, let $X_j$ be the $j \times (n+1-j)$ Hankel matrix. Then, 
    \begin{enumerate}
        \item For $k \ge 1$ and $0\leq l\leq \floor{\frac{n+1}{2}}-t$, $
        \alpha\left(I_t(X_j)^{(k(\floor{\frac{n+1}{2}}-t+1)+l)}\right)=
                k\floor{\frac{n+1}{2}}-l.$
Furthermore, the Waldschmidt constant is given by the equation $$\widehat{\alpha}(I_t(X_j))=\frac{\floor{\frac{n+1}{2}}}{\floor{\frac{n+1}{2}}-t+1}.$$
\item  For $N \in \NN$ and $1<t<m$, the following are equivalent \begin{enumerate}
    \item    for all  $s,r\ge 1$,
    \begin{align*}
        \frac{\alpha\left(I_t(X_j)^{(s)}\right)}{s}\geq\frac{\alpha\left(I_t(X_j)^{(r)}\right)+N-1}{r+N-1} \text{ (Demailly-type bound)}
    \end{align*} 
    \item for all  $s\ge 1$,
    \begin{align*}
        \frac{\alpha\left(I_t(X_j)^{(s)}\right)}{s}\geq\frac{\alpha\left(I_t(X_j)\right)+N-1}{N} \text{ (Chudnovsky-type bound)}
    \end{align*} \item    $N\geq \floor{\frac{n+1}{2}}-t+1$. \end{enumerate}
    \item For $N \in \NN$,  
    $$ \rho(I_t(X_j)^{(\bullet)}, (\m^N I_t(X_j))^\bullet)= \widehat{\rho}(I_t(X_j)^{(\bullet)}, (\m^N I_t(X_j))^\bullet) = \frac{N+\alpha(I_t(X_j))}{\widehat{\alpha}(I_t(X_j))}.$$
    \item For $N \in \NN$,  
    $$  \widehat{\rho}(I_t(X_j)^{(\bullet)}, \m^N( I_t(X_j))^\bullet) = \frac{\alpha(I_t(X_j))}{\widehat{\alpha}(I_t(X_j))}.$$
    \end{enumerate}
\end{theorem}
\begin{proof}
The proof follows from \Cref{wald-family}, \Cref{chudnovsky and demailly bounds}, \Cref{asym-res-HH1}, and \Cref{asym-res-HH}. Since $\overline{\m^rI_t(X_j)^s} =\m^rI_t(X_j)^s$ for all $s,r$, by \cite[Corollary 4.7]{HKNN23}, $\rho(I_t(X_j)^{(\bullet)}, (\m^N I_t(X_j))^\bullet)= \widehat{\rho}(I_t(X_j)^{(\bullet)}, (\m^N I_t(X_j))^\bullet).$ Hence, the assertion follows. 
\end{proof}
As mentioned in the introduction, the theory built-in \Cref{sec 2,Sec3} is not restricted to classical varieties, although these provide the first place to test the hypothesis. While  \Cref{self linked example,star config example} satisfy the framework, it remains an open question whether a broader class of examples can be constructed within the setting of \Cref{sec 2,Sec3}. 

\bibliographystyle{siam}
\bibliography{references}

\end{document}